\newtheorem{theorem}{\bf Theorem}[section]
\newtheorem{conjecture}{\bf Conjecture}[section]
\newtheorem{proposition}{\bf Proposition}[section]
\newtheorem{lemma}{\bf Lemma}[section]
\def\be{\begin{equation}}
	\def\ee{\end{equation}}
\def\bse{\begin{subequations}}
	\def\ese{\end{subequations}}
\def\bge{\begin{eqnarray}}
	\def\bgee{\begin{eqnarray*}}
		\def\ege{\end{eqnarray}}
	\def\egee{\end{eqnarray*}}
\begin{document}

%%%% Article title to be placed here
\title{Quenching for a semi-linear wave equation for MEMS}

\author{%%%% Author details
Heiko Gimperlein\thanks{Leopold-Franzens-Universit\"{a}t Innsbruck, Engineering Mathematics, Technikerstra\ss e 13, 6020 Innsbruck, Austria} \thanks{Department of Mathematical, Physical and Computer Sciences,
University of Parma, 43124, Parma, Italy} \and  Runan He\thanks{Institut f\"{u}r Mathematik, Martin-Luther-Universit\"{a}t Halle-Wittenberg, 06120 Halle (Saale), Germany} \and  Andrew A.~Lacey\thanks{Maxwell Institute for Mathematical Sciences and Department of Mathematics, Heriot-Watt University, Edinburgh, EH14 4AS, United Kingdom}}
\date{}

%%%% Subject entries to be placed here %%%%
%\subject{35B44 (primary); 35B40, 35L71, 74K15 (secondary).}

%%%% Keyword entries to be placed here %%%%
%\keywords{semi-linear wave equation; finite-time singularities; quenching; self-similar solution; MEMS.}

%%%% Insert corresponding author and its email address}
%\corres{Heiko Gimperlein\\

%%%% Abstract text to be placed here %%%%%%%%%%%%
%%%%%%%%%%%%%%%%%%%%%%%%%%%

%%%%%%%%%% Insert the texts which can accomdate on firstpage in the tag "fmtext" %%%%%

\maketitle \vskip 0.5cm
\begin{abstract}
\noindent  We consider the 
formation of finite-time quenching singularities for solutions of semi-linear wave equations with negative power nonlinearities, as can model micro-electro-mechanical systems (MEMS). For radial initial data we 
obtain, formally, the existence of a sequence of quenching self-similar solutions. 
Also from formal asymptotic analysis, a solution to the PDE which is radially symmetric and increases strictly monotonically with distance from the origin 
quenches at the origin like an explicit spatially independent solution.
The latter analysis and numerical experiments suggest a detailed conjecture for the singular behaviour.
\end{abstract}

\section{Introduction}
%%%% Insert A head here

Micro-electro-mechanical systems, or MEMS, are micro-scale devices which transform electrical energy into mechanical energy and vice versa. MEMS combine the mechanics of deformable membranes, springs or levers with electric circuits, resistors, capacitors and inductors. They are crucial components of modern technology, from smart-phones and printer heads, to airbags, micro-valves and a wide variety of sensors.

 An idealized electrostatically actuated MEMS capacitor device contains two conducting plates which, when the device is uncharged and at equilibrium, are close and parallel to each other. 

%%%%%%%%%%%%%%% End of first page %%%%%%%%%%%%%%%%%%%%%

\begin{figure}[t]
	\begin{center}
		{\includegraphics[width=0.5\textwidth]{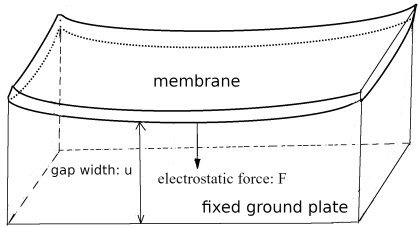}}
		\caption{Idealized electrostatically actuated MEMS capacitor.}
		\label{fig0}
	\end{center}
\end{figure}
More generally, we suppose a fixed potential difference is applied; this potential difference acts across the plates, so that the MEMS device forms a capacitor. The upper of  
the plates is flexible but pinned around its edges. The other, lower, plate is taken to be rigid and flat. The flexible plate is here taken to behave as 
a membrane,  
and it is driven by the electrostatic force between the different electric potentials and so deforms towards to the rigid plate (see Fig.~\ref{fig0}). The plate separation $u(x,t)$ can be modelled by the semi-linear equation \eqref{semi-linear-wave-equation}, below, in space dimensions $n = 1$, 2, under the
realistic assumption that: (i) the width of the gap, between the membrane and the bottom plate, is small compared to the device length; (ii) damping can be neglected.
Such a device can suffer from an instability in which the plates 
touch when the voltages are increased beyond a certain critical value. This touchdown (also known as pull-in instability) restricts the range of stable operation of the device.

In the present paper we now consider the 
semi-linear wave equation in a smoothly bounded domain $\Omega \subset \mathbb{R}^n$, $n \geq 2$:
\be\label{semi-linear-wave-equation}
\frac{\partial^2 u}{\partial t^2}=\Delta u-\frac{1}{u^2},
\ee
the case of $n=1$ having being looked at in \cite{KLNT}.
The solution $u$ of \eqref{semi-linear-wave-equation} may cease to exist after a finite time $T_{\max}$. 
The main purpose of this paper is to study the local behaviour of $u$ near $t = T_{\max}$.

The slightly more general model,
\begin{equation}\label{intro-3}
	\frac{\partial^2 u}{\partial t^2}+ \epsilon \frac{\partial u}{\partial t}-\Delta u=-\frac{\beta_F}{u^2},\quad u|_{\partial\Omega}=1,
\end{equation}
with a first-derivative term $\frac{\partial u}{\partial t}$ modelling damping, is to date, better understood. 

The pull-in voltage (the applied voltage is represented here by $\beta_F$) separates the stable operation regime for which the membrane can approach a steady state, from the “touchdown” regime for which the membrane collapses onto the rigid plate. There exists a critical value $\beta_F^{*}$ such that touchdown takes place in finite time for large $\beta_F>\beta_F^{*}$, while there is at least one stationary solution of \eqref{intro-3} for small $\beta_F<\beta_F^{*}$ \cite{FG}, see also \cite{guo2014hyperbolic}. Mathematically, the collapse, with finite-time existence occurring through the touchdown, is seen to be through quenching, $\inf_{x \in \Omega} u \to 0$ as $t \to T_{\max}$. {A point where $u$ tends to $0$ is a quenching point; $T_{\max}$ is the quenching time.}

Quenching in \eqref{intro-3} has been thoroughly studied when the damping term dominates, i.e.~$\epsilon \gg 1$ \cite{LW}. Then the semi-linear damped wave equation \eqref{intro-3} can be reduced to a parabolic equation, for which general techniques based on the maximum principle have been developed. Specifically for the quenching profile we refer to \cite{GuoJ}, as well as \cite{boni2000quenching,drosinou2020impacts,ghoussoub2008estimates,guo2012nonlocal,kavallaris2008touchdown}.

This article studies the quenching profile when the contribution of the inertial terms dominates, i.e.~$\epsilon \ll 1$. In this regime, \eqref{intro-3} simplifies to the hyperbolic model
\begin{equation}\label{intro-5}
	\frac{\partial^2 u}{\partial t^2}-\Delta u=-\frac{\beta_F}{u^2},\quad u|_{\partial\Omega}=1.
\end{equation}
Local and global existence results for \eqref{intro-5} and the dynamical behaviour of solutions, depending on the parameter $\beta_F$, have been obtained in \cite{TNLK}. The analysis of quenching in semi-linear wave equations goes back to \cite{ChL}, who showed, in our language, the existence of a critical value $\beta_F^{*}$ separating quenching from stationary behaviour in one dimension. While qualitative results are now well-understood for a variety of hyperbolic models \cite{FG, TNLK,levine1984abstract,LZ,smith1989hyperbolic}, the behaviour close to quenching, i.e.~the quenching profile, has not been studied previously for higher dimensions $n\geq2$.

{To understand the local behaviour of $u$ near the quenching time, we consider the existence of self-similar, radial solutions to \eqref{semi-linear-wave-equation} in $\mathbb{R}^n$. We look for these self-similar solutions using new variables:
\begin{equation*}
u(x,t)=\left(T_{\max}-t\right)^{\frac{2}{3}}v(\eta), \quad \eta=\frac{|x|}{T_{\max}-t}.
\end{equation*}
{The form $u=\left(T_{\max}-t\right)^{\frac{2}{3}}v$ is indicated by the spatially uniform solution of \eqref{semi-linear-wave-equation}.} 
For $v$ to indeed give a solution $u$ at quenching, i.e. as $t \to T_{\max}$ so $\eta \to \infty$ for fixed $|x|$, $v$ should behave as const.$\times \eta^{\frac23}$ for large $\eta$. A key aim of the present paper is thus to investigate the existence of smooth $v$ having the right growth condition at infinity, such that $u$ satisfies \eqref{semi-linear-wave-equation} in $\mathbb{R}^n$.}
	
{In dimension $n=1$, reference \cite{TNLK} shows that the {trivial, constant  $v_0 \equiv c^*=\left(\frac{9}{2}\right)^{\frac{1}{3}}$ is the only smooth global  $v$.} In this article we show that in dimensions $2 \le n \le 7$, however, non-trivial radial smooth quenching solutions exist in $\mathbb{R}^n$: There is a sequence of non-trivial smooth global solutions $v_j$ with $v_j(0) \to 0^+$, which satisfy the growth condition at infinity. }

{A precise statement of our results is given in Theorem \ref{ATM5} in Section \ref{GloExist}. }

Results related to those of Section \ref{GloExist}
have been proved for blow-up of solutions to semi-linear PDEs
with nonlinearities $u^p$ in \cite{dai2021} for wave equations and
in \cite{collot2016stability} for heat equations. Although the formal results
of the present paper are closely related to those of those two works, being
somewhat weaker, the structure of {the relevant self-similar equation} \eqref{SelfSimilarEquation} differs
from that of the corresponding ODEs for papers \cite{dai2021} and \cite{collot2016stability}:
a global solution is not guaranteed {\it a-priori} {in our case.} This means that key steps do not apply,
and the Lyapunov function used in \cite{dai2021} might not exist here.
Other related works on blow-up are referenced in \cite{collot2016stability} and \cite{dai2021}.

{The relevance of the global self-similar solutions $v_j$ for quenching in \eqref{intro-5} hinges on their stability. In Section~\ref{sectionasymp} we go into this by
showing formal and numerical evidence that in $n$ dimensions the trivial self-similar solution $v_0 = c^*$ is stable, {other than to a very particular class of (spatially uniform) perturbations which correspond to solutions quenching at different times.} The generic local behaviour of a strictly increasing radial quenching solution $u$ near the quenching time $T_{\max}$ is given by $u(x,t) = \left(T_{\max}-t\right)^{\frac{2}{3}}V(\xi,t)$, with $\xi=\frac{r}{(T_{\max}-t)^{1/2}}$, {$r=|x|$ being the distance to the quenching point,} and
\[
	V(\xi,t) \sim C\xi^{\frac{4}{3}} \quad \text{{as $\xi \to \infty$} and } \quad V(0,t) \to c^*\notag
\]
for $t \to T_{\max}$. Correspondingly,
\[
u(x,T_{\max}) \sim C r^{\frac{4}{3}} \quad \text{as $r \to 0$ \quad with }\quad
u(0,t) \sim c^* (T_{\max}- t)^{\frac 23} \text{ as $t \to T_{\max}$,}
\]
provided $u$ is a strictly increasing function of  $r$ near the quenching time. Here $C$ is a constant which depends on $n$, and on the initial and boundary conditions for the semi-linear wave equation. 
We conjecture that for $2 \le n \le 7$, the smooth self-similar solutions $v_j$ are unstable for $j \in \mathbb{N}$, as supported by the formal and numerical results in the paper \cite{TNLK} for the problem in one space dimension. }

{A precise statement is given in Conjecture \ref{mainconj} in Section~\ref{sectionasymp}.}

We discuss the importance of our results in the concluding Section~\ref{Disc}.

\section{Global Existence of Self-Similar Solutions}\label{GloExist}

{To understand the local behaviour of the solution $u$ to \eqref{semi-linear-wave-equation} near the quenching time, we consider the existence of self-similar, radial solutions to the equation in $\mathbb{R}^n$:
\be\label{semi-linear-wave-equation-R}
\frac{\partial^2 u}{\partial t^2}=\frac{\partial^2 u}{\partial r^2}+\frac{n-1}{r}\frac{\partial u}{\partial r}-\frac{1}{u^2}.
\ee
Here, $r=|x|$ and $u=u(r, t)$. We can look for self-similar solutions using new variables:
\be\label{SelfSimilarVariables}
u(r,t)=\left(T_{\max}-t\right)^{\frac{2}{3}}v(\eta), \quad \eta=\frac{r}{T_{\max}-t}.
\ee
From  \eqref{semi-linear-wave-equation-R},  
$v$ then satisfies the self-similar equation
\be\label{SelfSimilarEquation}
(1-\eta^2)\frac{d^2v}{d\eta^2}+\left(\frac{n-1}{\eta}-\frac{2}{3}\eta\right)\frac{dv}{d\eta}+\frac{2}{9}v-\frac{1}{v^2}=0.
\ee
It can be noted, from the transformations \eqref{SelfSimilarVariables}, that for $v$ to indeed give a solution $u$ at quenching, i.e. as $t \to T_{\max}$ so $\eta \to \infty$ for fixed $r$, $v$ should behave as const.$\times \eta^{\frac23}$ for large $\eta$. A key aim of the present paper is thus to investigate the existence of smooth solutions of \eqref{SelfSimilarEquation} having the right growth condition at infinity.}

{Our analysis in this section will be based on a shooting method: imposing initial conditions at $\eta = 0$,
\be\label{InitialValue0}
v(0)=c>0,\quad \frac{dv}{d\eta}(0)=0,
\ee
we determine those $c$ for which $v$ extends smoothly across $\eta=1$ and has the physically relevant behaviour $v \sim $ const.$\times \eta^{2/3}$ as $\eta \to \infty$. }

{Note that \eqref{SelfSimilarEquation} admits the trivial, constant solution $v_0 \equiv c^*=\left(\frac{9}{2}\right)^{\frac{1}{3}}$. In dimension $n=1$, reference \cite{TNLK} shows that this is the only smooth global solution of \eqref{SelfSimilarEquation}. In dimensions $2 \le n \le 7$, however, non-trivial global smooth solutions exist:}
{\begin{theorem}[Global Existence of Self-Similar Solutions, Formal Theorem]\label{ATM5}
a)	There exists a discrete set of initial conditions $c = c_j < c^*$ in \eqref{InitialValue0}, with $c_j \to 0^+$ as $j \to \infty$, such that the self-similar equation \eqref{SelfSimilarEquation} has a non-trivial smooth global solution $v_j$ for $2 \le n \le 7$. \\
b) The solutions $v_j(\eta) \sim a \eta^{\frac{2}{3}}$ for $c_j^{\frac{3}{2}}\ll\eta\ll1$, where $a=\left[\frac{2}{3}\left(n-\frac{4}{3}\right)\right]^{-\frac{1}{3}}$, and $v_j(\eta) \sim b_{j} \eta^{\frac{2}{3}}$ for $\eta \to \infty$, where $b_{j}$ is a constant depending upon $n$ and $j$.
\end{theorem}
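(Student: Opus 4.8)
The plan is to prove Theorem~\ref{ATM5} by a matched-asymptotic shooting argument organised around the three regions $\eta\ll c^{3/2}$, $c^{3/2}\ll\eta\ll 1$, $\eta=O(1)$, together with the two singular points $\eta=0,1$ and the far field. First, since $\eta=0$ is a regular singular point of \eqref{SelfSimilarEquation} (from the $\tfrac{n-1}{\eta}$ term), a Frobenius analysis shows that the solutions smooth at the origin are exactly those satisfying \eqref{InitialValue0}, and each $c=v(0)>0$ determines a unique local solution $v(\,\cdot\,;c)$. The problem is thereby reduced to finding the $c\in(0,c^*)$ for which $v(\,\cdot\,;c)$ survives to $\eta=1$, extends past it as a real smooth solution, stays positive, and grows like a constant times $\eta^{2/3}$ at infinity.

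Next I would analyse the small-$c$ inner layer. For $c$ small the term $-v^{-2}\sim -c^{-2}$ is large and balances $v''$ only on the scale $\eta\sim c^{3/2}$; rescaling $v=c\,V$, $\eta=c^{3/2}\zeta$ reduces \eqref{SelfSimilarEquation} at leading order to the autonomous inner problem
\[
V''+\frac{n-1}{\zeta}V'=\frac{1}{V^2},\qquad V(0)=1,\quad V'(0)=0 .
\]
Writing it as $(\zeta^{n-1}V')'=\zeta^{n-1}V^{-2}>0$ shows $V$ is strictly increasing with $V\ge 1$, hence global, and the dominant balance for large $\zeta$ is $V\sim a\zeta^{2/3}$ with $a^3=[\tfrac23(n-\tfrac43)]^{-1}$, positive since $n\ge 2>\tfrac43$; matching to the outer region produces the intermediate law $v_j\sim a\eta^{2/3}$ of part~b). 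Linearising about $V=a\zeta^{2/3}$ gives the Euler equation $\psi''+\frac{n-1}{\zeta}\psi'+\tfrac43(n-\tfrac43)\zeta^{-2}\psi=0$, with indicial equation $m^2+(n-2)m+\tfrac43(n-\tfrac43)=0$ and discriminant $\tfrac19(9n^2-84n+100)$, whose roots are $n=\tfrac13(14\pm4\sqrt6)\approx 1.40,\,7.93$; hence the exponents $m$ form a complex conjugate pair $\mu\pm i\omega$ precisely for integer $n\in\{2,\dots,7\}$ (and are real for $n=1$ and for $n\ge 8$). In these dimensions, and only these, the approach of $V$ to the singular power $a\zeta^{2/3}$ is oscillatory in $\ln\zeta$, with frequency $\omega=\tfrac16\sqrt{84n-9n^2-100}$. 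This is the analogue of the Joseph--Lundgren threshold and the source of the multiplicity, playing the role that the Lyapunov function plays in \cite{dai2021,collot2016stability}.

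Then comes the selection. In the outer region $\eta=O(1)$ the full equation \eqref{SelfSimilarEquation} holds, and since $\ln\zeta=\ln\eta-\tfrac32\ln c$ the oscillatory correction inherited from the inner layer reaches $\eta=O(1)$ carrying a phase $\tfrac32\omega|\ln c|$ that winds as $c\to 0^+$, with amplitude of order $c^{\gamma}$ for some $\gamma>0$. The point $\eta=1$ is again a regular singular point: $1-\eta^2$ vanishes simply there, the indicial exponents (in $\tau=1-\eta$) are $0$ and $\tfrac n2+\tfrac16$, and the latter is never an integer for $n\ge 2$, so one local solution is regular at $\eta=1$ and the other has a branch point; extending past $\eta=1$ as a real solution forces the coefficient $D(c)$ of the branch-point solution to vanish, one real condition. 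By the winding just described, $D(c)\sim c^{\gamma}\,\mathrm{Re}\!\big(\mathcal A\,e^{-\frac32 i\omega\ln c}\big)$ as $c\to 0^+$, an oscillation about zero, so $D(c_j)=0$ along a sequence $c_j\to 0^+$ with $\tfrac32\omega|\ln c_j|$ in arithmetic progression, all $c_j<c^*$ by construction. Finally, past $\eta=1$ the Euler balance $-\eta^2v''-\tfrac23\eta v'+\tfrac29 v=0$ gives far-field exponents $\tfrac23$ and $-\tfrac13$, so a solution staying positive on $(1,\infty)$ automatically behaves like $b_j\eta^{2/3}$; checking positivity on all of $(0,\infty)$ for the selected $c_j$ then completes part~b).

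The main obstacle is making this selection rigorous under the complication, noted in the introduction, that no \emph{a priori} global bound or maximum principle is available for \eqref{SelfSimilarEquation}: one must (i) justify the matched expansion with errors controlled uniformly as $c\to 0^+$, (ii) prove the amplitude $\mathcal A\neq 0$, so that $D$ changes sign infinitely often near $c=0$ and does so transversally, and (iii) rule out premature quenching ($v\to 0$) or blow-up before $\eta$ reaches $1$, and establish positivity on $(1,\infty)$, for the relevant $c$. The borderline dimension $n=2$, where $\mu=0$ and the inner oscillation about $a\zeta^{2/3}$ does not decay in absolute terms, requires the nonlinear terms to be retained more carefully in the matching.
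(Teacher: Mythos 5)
Your proposal follows essentially the same route as the paper's formal proof: the inner rescaling $v=cV$, $\eta=c^{3/2}\zeta$ leading to the Emden--Fowler problem, the Joseph--Lundgren-type dichotomy (oscillatory approach to $a\eta^{2/3}$ exactly when the indicial roots are complex, i.e.\ $2\le n\le 7$), the outer linearisation about $a\eta^{2/3}$ with regularity at $\eta=1$ (singular exponent $\tfrac n2+\tfrac16$) as the one real selection condition, and the winding of the phase $\tfrac32 b|\ln c|$ producing the discrete sequence $c_j\to0^+$; your zero-crossing condition $D(c_j)=0$ is the dual formulation of the paper's phase-matching identity \eqref{Matching4a}, and your monotone integrated form $(\zeta^{n-1}V')'=\zeta^{n-1}V^{-2}>0$ replaces the paper's Lyapunov function for the inner problem.

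The one concrete gap is in the last step of part b): positivity of $v$ on $(1,\infty)$ does \emph{not} by itself force $v\sim b_j\eta^{2/3}$, because dropping the nonlinear term $-v^{-2}$ from the far-field balance discards the second admissible asymptote $v\to c^*$ (Lemma \ref{ATM4}), realised e.g.\ by the constant solution. The paper closes this by the sign analysis of local extrema ((a)--(d) in Section \ref{GloExist}), showing that for large $j$ the selected $v_j$ tracks $a\eta^{2/3}$, crosses $c^*$ transversally at some $\eta^{**}>1$ and remains strictly increasing thereafter, which both yields global existence and excludes the constant limit; you would need to add this (or an equivalent) monotonicity argument rather than appeal to positivity alone.
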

Theorem \ref{ATM5} is proved formallly in this section below.}

{Formal asymptotics also indicate that the smooth solutions $v_j(\eta)$ are monotonically increasing with respect to $\eta$ for large enough $j$.} 

{Note that \eqref{SelfSimilarEquation} presents challenges through the degeneracy of the coefficient of the highest-order derivative at $\eta=1$, the light cone, and the possibility of limited existence of solution, as $v$ could vanish at some finite value of $\eta$.}

{We briefly remark that, for a steady state, equation \eqref{semi-linear-wave-equation-R} reduces to a radial ODE  \eqref{EFODE} 
for which the qualitative behaviour has been studied extensively:
\begin{equation}\label{EFODE}
	\frac{d}{d\eta}\left(\eta^{n-1}\frac{dv}{d\eta}\right)+\lambda\eta^{n-1}  \mathcal{F}(v)=0 ,
\end{equation} 
here writing $v=u$ and $\eta = r$, with nonlinearity $\mathcal{F}(v) = -1/v^2$, similar to an equation of Emden-Fowler type.
For $n>2$ equation \eqref{EFODE} has been considered in the classical work \cite{JL}, establishing the local behaviour of a singular solution. For existence, nonexistence and multiplicity of solutions we refer to the seminal paper by Gelfand \cite{gel1959some}.}

\subsection{Asymptotic Behaviours of Self-Similar Solutions}
To prove Theorem \ref{ATM5} formally we must understand the asymptotic behaviours of the self-similar solutions, as indicated by the following Proposition \ref{ATM1}, Lemma \ref{ATM3} and Lemma \ref{ATM4}.

\subsubsection*{Asymptotic Behaviour of Self-Similar Solution Near 0}
\begin{proposition}\label{ATM1}
	\ For $0<c\ll\left(\frac{9}{2}\right)^{\frac{1}{3}}$, the asymptotic behaviour of the solution to the self-similar equation
	\eqref{SelfSimilarEquation} with the initial values \eqref{InitialValue0} for $c^{\frac{3}{2}}\ll\eta\ll1$ is of the form, when $2 \le n \le 7$,
	\begin{align}
		v&\sim a\eta^{\frac{2}{3}}+A_1c^{\frac{3}{4}n-\frac{1}{2}}\eta^{1-\frac{n}{2}}\cos\left(b\ln(\eta c^{-\frac{3}{2}})+A_2\right) + \dots ; 
\label{AsymForm1}
	\end{align}
	and when $n\geq8$,
	\begin{align}
		v&\sim a\eta^{\frac{2}{3}}+c^{\frac{3}{4}n-\frac{1}{2}}\eta^{1-\frac{n}{2}}\left(A_3c^{-\frac{3}{2}b}\eta^b+A_4c^{\frac{3}{2}b}\eta^{-b}\right) + \dots 
\label{AsymForm2}.
	\end{align} 
Here, $a=\left[\frac{2}{3}\left(n-\frac{4}{3}\right)\right]^{-\frac{1}{3}}$, $b = \left|\left(\frac{n}{2}-\frac{7}{3}\right)^2-\frac{8}{3}\right|^{1/2} \geq 0$, and $\{A_j\}_{j=1}^4$ are suitable constants dependent on $n$, but not $c$.
\end{proposition}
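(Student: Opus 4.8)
\emph{Proposal.} The plan is to analyse the limit $c\to 0^+$ by matched asymptotics, with an inner region $\eta=O(c^{3/2})$ next to the origin and an outer region $c^{3/2}\ll\eta\ll 1$, and then to read off the stated expansion in the overlap. First I would locate the inner scale: the data force $v\approx c$ near $\eta=0$, so $v^{-2}\approx c^{-2}$ swamps the relaxation term $\tfrac29 v$, and balancing $v''+\tfrac{n-1}{\eta}v'$ against $v^{-2}$ gives $\eta\sim c^{3/2}$. Substituting $\eta=c^{3/2}\zeta$, $v=c\,w(\zeta)$ into \eqref{SelfSimilarEquation} and multiplying by $c^2$ yields $w''+\tfrac{n-1}{\zeta}w'-w^{-2}+O(c^3)=0$, whose leading order as $c\to0$ is the $c$-independent inner problem $w''+\tfrac{n-1}{\zeta}w'-w^{-2}=0$, $w(0)=1$, $w'(0)=0$. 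Writing this as $(\zeta^{n-1}w')'=\zeta^{n-1}w^{-2}>0$ shows $w$ is increasing, hence $w\ge 1$, the nonlinearity stays bounded, the solution is global with $w\to\infty$, and the dominant balance $w''+\tfrac{n-1}{\zeta}w'\sim w^{-2}$ forces $w\sim a\zeta^{2/3}$ with $a=[\tfrac23(n-\tfrac43)]^{-1/3}$ — precisely the value that makes $a\zeta^{2/3}$ an exact solution of the inner equation, and which is real and positive since $n\ge 2$.

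Next I would compute the correction to the power law. Linearising the inner equation (equivalently the reduced outer equation below) about $v_p=a\eta^{2/3}$, using $\tfrac{d}{dv}(-v^{-2})=2v^{-3}$ together with $2v_p^{-3}=\tfrac43(n-\tfrac43)\eta^{-2}$, gives the equidimensional equation
\[
\psi''+\frac{n-1}{\eta}\psi'+\frac{\tfrac43(n-\tfrac43)}{\eta^2}\,\psi=0 ,
\]
whose indicial equation $m^2+(n-2)m+\tfrac43(n-\tfrac43)=0$ has roots $m=1-\tfrac n2\pm\bigl[(\tfrac n2-\tfrac73)^2-\tfrac83\bigr]^{1/2}$. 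An elementary check shows $(\tfrac n2-\tfrac73)^2-\tfrac83<0$ exactly for $2\le n\le 7$ and $\ge 0$ for $n\ge 8$, so the fundamental pair is $\eta^{1-n/2}\cos(b\ln\eta)$, $\eta^{1-n/2}\sin(b\ln\eta)$ in the first case and $\eta^{1-n/2\pm b}$ in the second, with $b$ as in the statement. The one genuinely non-elementary input is the inner connection problem: the global inner solution with $w(0)=1$, $w'(0)=0$ must have, as $\zeta\to\infty$, the expansion $w\sim a\zeta^{2/3}+\tilde A\,\zeta^{1-n/2}\cos(b\ln\zeta+\tilde A_2)+\dots$ (resp.\ with the two real powers), where $\tilde A,\tilde A_2$ (resp.\ the two amplitudes) are $O(1)$ constants determined by $n$ alone; for a formal result these are simply carried along.

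Finally I would match. In the outer region $c^{3/2}\ll\eta\ll 1$ the omitted terms $\eta^2 v''$, $\tfrac23\eta v'$, $\tfrac29 v$ are each $O(\eta^2)$ relative to the retained ones, so \eqref{SelfSimilarEquation} reduces to $v''+\tfrac{n-1}{\eta}v'-v^{-2}=0$ with outer solution $v=a\eta^{2/3}+\psi$, $\psi$ a combination of the Euler modes above; rewriting the inner expansion in outer variables through $\zeta=\eta c^{-3/2}$, $v=cw$ turns $c\,\tilde A\,\zeta^{1-n/2}$ into $c^{\frac34 n-\frac12}\tilde A\,\eta^{1-n/2}$ and $\cos(b\ln\zeta+\tilde A_2)$ into $\cos\bigl(b\ln(\eta c^{-3/2})+\tilde A_2\bigr)$ (resp.\ $\zeta^{\pm b}\mapsto c^{\mp\frac32 b}\eta^{\pm b}$). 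This pins down the outer amplitudes and reproduces \eqref{AsymForm1} for $2\le n\le 7$ and \eqref{AsymForm2} for $n\ge 8$ with constants $A_j$ depending on $n$ but not $c$; the condition $\eta\gg c^{3/2}$ is exactly what makes the correction small relative to $a\eta^{2/3}$, while $\eta\ll 1$ legitimises dropping the light-cone and relaxation terms. I expect the main obstacle to be the inner connection problem — establishing that the inner solution really approaches $a\zeta^{2/3}$ with the stated oscillatory/power correction and extracting its amplitude and phase from that non-autonomous ODE — the remainder being routine bookkeeping of negligible terms and Euler-equation algebra; since Proposition \ref{ATM1} is stated as a formal result, it suffices to exhibit the matched expansion consistently rather than to control all remainders rigorously.
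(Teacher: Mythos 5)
Your proposal follows essentially the same route as the paper: the rescaling $v=cw$, $\eta=c^{3/2}\zeta$ to the $c$-independent Lane--Emden-type inner problem, the identification of the exact power-law solution $a\zeta^{2/3}$, linearisation about it to get the exponents $1-\tfrac n2\pm\sqrt{(\tfrac n2-\tfrac73)^2-\tfrac83}$ (the paper does this in the variables $Z_0=V_0\bar r^{-2/3}$, $\bar s=\ln\bar r$, where your Euler equation becomes a constant-coefficient system about the equilibrium $(a,0)$), and the rewriting in outer variables to produce the $c^{\frac34 n-\frac12}$ and $c^{\mp\frac32 b}$ factors. The one step you flag as an assumption --- that the inner solution with $w(0)=1$, $w'(0)=0$ actually converges onto $a\zeta^{2/3}$ --- is closed in the paper by exhibiting the Lyapunov function $\tfrac12(Z_0')^2+\tfrac13(n-\tfrac43)Z_0^2+Z_0^{-1}$ for the autonomous system, which forces $(Z_0,Z_0')\to(a,0)$; otherwise your argument matches the paper's.
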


\subsubsection*{Asymptotic Behaviour of Self-Similar Solution Near 1}
\begin{lemma}\label{ATM3}
	{Any positive solution $v$ of the equation \eqref{SelfSimilarEquation} 			admits a decomposition into a smooth  and a singular part, $v = {v_{reg}}+v_{sing,\pm}$, at $\eta=1^\pm$. We have $$v_{sing,\pm}(\eta) =  \textstyle{\sum_{\alpha \in  E}c_\alpha^\pm |\eta-1|^\alpha}\ ,$$ where $c_\alpha^\pm$ are constants and $E$ is specified below \eqref{AsymPowerSeries}. The  smallest exponent $\alpha \in E$ is given by $\alpha=\frac{n}{2}+\frac{1}{6}$.}
	\end{lemma}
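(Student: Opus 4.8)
The plan is to study \eqref{SelfSimilarEquation} locally near the light cone $\eta = 1$ by a Frobenius-type analysis. First I would introduce the new variable $s = \eta - 1$ (treating $s \to 0^+$ and $s \to 0^-$ separately) and rewrite \eqref{SelfSimilarEquation} in the form
\[
-s(2+s)\frac{d^2v}{ds^2}+\left(\frac{n-1}{1+s}-\frac{2}{3}(1+s)\right)\frac{dv}{ds}+\frac{2}{9}v-\frac{1}{v^2}=0,
\]
so that $s = 0$ is a regular singular point of the linear part: the coefficient of $v''$ vanishes to first order, while the coefficient of $v'$ is analytic and nonzero at $s = 0$ (equal to $n - 1 - \tfrac23 = n - \tfrac53$). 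The key observation is that the indicial equation at $s = 0$ has roots $\alpha = 0$ and $\alpha = \alpha_0$ with $\alpha_0$ determined by balancing the $s v''$ term against the $v'$ term: writing $v \sim c_\alpha s^\alpha$, the dominant balance gives $-2\alpha(\alpha-1) - (n - \tfrac53)\alpha \cdot(\text{sign})$... more precisely, $2\alpha(\alpha-1) = (n-\tfrac53)\alpha$ at leading order after accounting for the $-s(2+s)$ prefactor, which yields $\alpha_0 = \tfrac12(n - \tfrac53) + 1 = \tfrac n2 + \tfrac16$. This identifies the smallest nontrivial exponent claimed in the statement.

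Next I would set up the decomposition $v = v_{reg} + v_{sing,\pm}$: since one indicial root is $0$, there is a genuine analytic (in $s$) local solution branch, which I call $v_{reg}$; it is the smooth part that extends across $\eta = 1$, and its Taylor coefficients are obtained recursively from the ODE (the nonlinearity $1/v^2$ is analytic as long as $v_{reg}(1) \neq 0$, which holds for a positive solution). The remaining freedom in the general solution is carried by the second branch, starting at the exponent $\alpha_0 = \tfrac n2 + \tfrac16$; because $\alpha_0$ is generically irrational (not an integer, and not differing from $0$ by an integer for the relevant $n$), there is no logarithmic term and the singular part admits a convergent expansion $v_{sing,\pm}(\eta) = \sum_{\alpha \in E} c_\alpha^{\pm} |\eta - 1|^\alpha$, where $E$ is the set of exponents generated from $\alpha_0$ by the recursion — i.e.\ $\alpha_0$ together with all values $\alpha_0 + k$ and $\alpha_0 + j\alpha_0 + k$ forced by repeated interaction of the singular branch with the analytic coefficients and the nonlinearity (this is exactly what "specified below \eqref{AsymPowerSeries}" should pin down). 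The coefficients $c_\alpha^{\pm}$ are then determined recursively, with $c_{\alpha_0}^{\pm}$ free (one free constant on each side, matching the one remaining degree of freedom after $v_{reg}$ is fixed).

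The main obstacle I anticipate is twofold. First, justifying convergence of the singular series: because the exponent set $E$ is not a simple arithmetic progression (the nonlinearity $1/v^2$ mixes powers $\alpha_0$ multiplicatively, not just additively), one must check that the formal Frobenius recursion produces coefficients with at most geometric growth, so the series converges on a one-sided neighbourhood of $\eta = 1$; this is a standard but slightly delicate majorant argument complicated by the degenerate factor $s(2+s)$. Second, one must verify that $\alpha_0 = \tfrac n2 + \tfrac16$ genuinely is the \emph{smallest} exponent in $E$ — i.e.\ that no resonance with the $v_{reg}$ branch or with the nonlinearity produces a smaller positive power. Since $\alpha_0 > 0$ for all $n \ge 2$ and the only other indicial root is $0$ (absorbed into $v_{reg}$), and since every further exponent is obtained by adding positive quantities to $\alpha_0$, this should hold, but it requires carefully enumerating $E$ and checking the indicial polynomial does not vanish at any intermediate value. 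Everything else — the local existence of $v_{reg}$, the recursion for $c_\alpha^{\pm}$, the absence of logarithms — is routine regular-singular-point ODE theory once the change of variables and the indicial computation are in place.
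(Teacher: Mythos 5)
Your proposal follows essentially the same route as the paper's proof: the substitution $\hat{\eta}=\eta-1$, the identification of $\eta=1$ as a regular singular point with indicial roots $0$ and $\tfrac{n}{2}+\tfrac{1}{6}$ obtained from the balance $-2\alpha(\alpha-1)+\left(n-\tfrac{5}{3}\right)\alpha=0$, and the resulting splitting of $v$ into an analytic branch $v_{reg}$ plus a singular series beginning at $|\eta-1|^{\frac{n}{2}+\frac{1}{6}}$ on each side. The paper's argument is purely formal (it substitutes the ansatz and balances the $\hat{\eta}^{\underline{\alpha}-1}$ terms without addressing convergence or resonances), so your extra remarks on the majorant argument and the enumeration of $E$ go beyond, but are not required for, what the paper actually establishes.
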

	\subsubsection*{Asymptotic Behaviour of Self-Similar Solution Near Infinity}
	\begin{lemma}\label{ATM4}
		A solution $v$ of the equation \eqref{SelfSimilarEquation} existing for large $\eta$ is either asymptotically constant $v(\eta) \to c^\ast$ for $\eta \to \infty$, or it has an asymptotic form 
		\begin{equation}\label{AsymInfty}
			\textstyle{v(\eta)\sim \sum_{\alpha \in E}b_\alpha\eta^{\alpha} = b_{\frac{2}{3}} \eta^{\frac{2}{3}} + \dots,  b_{\frac{2}{3}} > 0,}
		\end{equation}
		for $\eta$ near $\infty$. The dots denote lower-order terms.  
		
	\end{lemma}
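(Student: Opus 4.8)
The plan is to recast the self-similar equation \eqref{SelfSimilarEquation} near $\eta=\infty$ as an asymptotically autonomous planar dynamical system and to read the forward behaviour of $v$ off its phase portrait. First I would fix $\eta_1>1$ and work on $[\eta_1,\infty)$: a solution that ``exists for large $\eta$'' is necessarily positive and locally bounded below there, since if $v$ approached $0$ the term $-1/v^2$ would force $v''<0$ with $v\to0$ at a finite value of $\eta$, contradicting persistence. Dividing \eqref{SelfSimilarEquation} by $(1-\eta^2)$ and expanding the coefficients in powers of $\eta^{-2}$ gives
\begin{equation*}
\frac{d^2v}{d\eta^2}+\frac{2}{3\eta}\frac{dv}{d\eta}+\frac{1}{\eta^2}\Big(\frac{1}{v^2}-\frac{2}{9}v\Big)+\big(O(\eta^{-3}v')+O(\eta^{-4}v)+O(\eta^{-4}v^{-2})\big)=0,
\end{equation*}
and passing to $s=\ln\eta$ with $Q=\eta\,\frac{dv}{d\eta}=\frac{dv}{ds}$ turns this into
\begin{equation*}
\frac{dv}{ds}=Q,\qquad \frac{dQ}{ds}=\tfrac13 Q+\tfrac29 v-\frac{1}{v^2}+R(s,v,Q),
\end{equation*}
with $R\to0$ as $s\to\infty$ uniformly on compact subsets of the half-plane $\{v>0\}$.

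Next I would analyse the limiting autonomous system on $\{v>0\}$. Its unique equilibrium is $(v,Q)=(c^*,0)$; the linearisation there has eigenvalues $1$ and $-\tfrac23$, so it is a hyperbolic saddle, and the solutions with $v\to c^*$ are exactly those on its one-dimensional stable manifold, along which $v-c^*\sim\text{const}\cdot\eta^{-2/3}$. The divergence of the vector field is the constant $\tfrac13$, so by the Bendixson--Dulac criterion there are no periodic orbits and no homoclinic loops in $\{v>0\}$; this sign-definiteness is the structural feature that here substitutes for the Lyapunov function which, as noted in the introduction, is not available. Near the edge $v=0$ one has $dQ/ds\to-\infty$, so no forward orbit can limit or linger there; an orbit descending into $\{v\ \text{small}\}$ is swept to $Q\to-\infty$ in finite $s$, i.e. to finite-$\eta$ quenching, which is excluded. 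For $v$ large the term $1/v^2$ is negligible and the system is effectively linear with eigenvalues $\tfrac23$ (eigendirection $Q=\tfrac23 v$, i.e. $v\sim b\,\eta^{2/3}$) and $-\tfrac13$, so a forward orbit escaping to large $v$ must align with the $\tfrac23$-mode.

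Combining these facts through the Poincaré--Bendixson theorem in its form for asymptotically autonomous planar systems yields the dichotomy: Bendixson--Dulac excludes non-stationary recurrent orbits, so the forward orbit either converges to $(c^*,0)$, giving $v(\eta)\to c^*$, or is unbounded; in the unbounded case descent towards $v=0$ is incompatible with persistence, so $v(\eta)\to\infty$, and then the large-$v$ linear analysis forces $v(\eta)\sim b_{2/3}\eta^{2/3}$ with $b_{2/3}>0$, the positivity coming from $v>0$ increasing monotonically along the $\tfrac23$-eigenmode. It remains, in the growing case, to upgrade the leading term to the full expansion \eqref{AsymInfty}: substituting $v=b_{2/3}\eta^{2/3}+\sum_{\alpha\in E,\,\alpha<2/3}b_\alpha\eta^\alpha$ into the expanded equation, the admissible exponents $E$ are generated from the indicial roots $\alpha=\tfrac23,-\tfrac13$ of $\alpha^2-\tfrac13\alpha-\tfrac29=0$ together with the powers produced by $\tfrac1{\eta^2 v^2}\sim b_{2/3}^{-2}\eta^{-10/3}+\cdots$, and the coefficients $b_\alpha$ then follow by recursion.

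I expect the main obstacle to be the non-compact features of the phase portrait together with the non-autonomous remainder $R$: one must justify, beyond dominant balance, that an orbit growing large genuinely tracks the $\eta^{2/3}$ eigenmode rather than escaping faster, that an orbit dipping towards $v=0$ really fails to persist, and the precise sense in which Poincaré--Bendixson applies here. As Theorem \ref{ATM5}, and hence this lemma, are stated at the formal level, these points are handled through matched-asymptotic reasoning; the one genuinely load-bearing ingredient is the constant sign of the divergence, which forecloses the oscillatory or recurrent behaviour at infinity that the missing Lyapunov function would otherwise leave open.
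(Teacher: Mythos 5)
Your proposal is correct at the formal level at which the paper operates, and your computations check out (the expansion of the coefficients, the eigenvalues $1$ and $-\tfrac23$ at $(c^*,0)$, the indicial roots $\tfrac23$ and $-\tfrac13$ all agree with what the paper obtains), but your route is genuinely different from the paper's. The paper's proof is a two-line dominant-balance argument: it \emph{postulates} an asymptotic expansion $v\sim\sum_{\alpha\in E}b_\alpha\eta^\alpha$, substitutes it into \eqref{SelfSimilarEquation}, and reads off from the indicial relation $-\alpha_0(\alpha_0-1)-\tfrac23\alpha_0+\tfrac29=0$ (together with the placement of the nonlinear term $b_{\alpha_0}^{-2}\eta^{-2\alpha_0}$) that the leading exponent must be $\alpha_0=\tfrac23$ or $\alpha_0=0$; the case split on the sign of $\mathrm{Re}\,\alpha_0$ is the whole argument. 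Your phase-plane approach does strictly more: by rewriting the equation in $s=\ln\eta$ as an asymptotically autonomous planar system, using the constant divergence $\tfrac13$ to exclude recurrent behaviour, and treating the escape routes $v\to0$ and $v\to\infty$ separately, you actually \emph{derive} the dichotomy (convergence to $c^*$ versus $\eta^{2/3}$ growth) that the paper implicitly assumes when it posits a power-type expansion; in particular you rule out oscillatory or recurrent asymptotics, which the paper's ansatz silently forecloses. The price is the extra machinery whose applicability you rightly flag (Poincaré--Bendixson for asymptotically autonomous systems, tracking of the growing eigenmode, non-persistence near $v=0$), none of which would be made rigorous at the paper's level either; your observation that the sign-definite divergence substitutes for the Lyapunov function the authors say is unavailable is a genuine added insight. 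Once you reach the growing case, your recursion for the exponent set $E$ coincides with the paper's expansion, so the two arguments meet there.
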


\subsubsection*{Proof of Proposition \ref{ATM1}}
	For small $0<c\ll\left(\frac{9}{2}\right)^{\frac{1}{3}}$, we set
	\[v=cV, \quad \eta=c^{\frac{3}{2}}\bar{r},\quad V=V(\bar{r}),\quad v=v(\eta).\]
	From \eqref{SelfSimilarEquation}, we obtain
	\be\label{A-SelfSimilarEquation2}
	\textstyle{\frac{d^2V}{d\bar{r}^2}+\frac{n-1}{\bar{r}}\frac{dV}{d\bar{r}}-\frac{1}{V^2}=c^3\left(\bar{r}^2\frac{d^2V}{d\bar{r}^2}+\frac{2}{3}\bar{r}\frac{dV}{d\bar{r}}
		-\frac{2}{9}V\right),\quad V(0)=1, \quad \frac{dV}{d\bar{r}}(0)=0.}
	\ee
	Decomposing $V$ as
	\be\label{Decom}
	V=V_0+c^3V_1,
	\ee
	the leading-order term $V_0$ is a solution of the initial-value problem
	\be\label{A-SelfSimilarEquation3}
	\textstyle{\frac{d^2V_0}{d\bar{r}^2}+\frac{n-1}{\bar{r}}\frac{dV_0}{d\bar{r}}-\frac{1}{V_0^2}=0, \quad V_0(0)=1, \quad \frac{dV_0}{d\bar{r}}(0)=0.}
	\ee
	A solution to an ODE like that of \eqref{A-SelfSimilarEquation3} has been extensively studied in terms of phase plane analysis in the paper \cite{JL} of Joseph and Lundgren. 

	Rescale the variables $V_0$ and $\bar{r}$ by
	\be\label{RescaledVariables}
	V_0=\bar{r}^{\frac{2}{3}}Z_0,\quad \bar{r}=e^{\bar{s}},\quad \bar{s}=\ln\bar{r}.
	\ee
Note that the initial conditions in \eqref{A-SelfSimilarEquation3} give
\be \label{ICZ0}
Z_0 \sim e^{- \frac 23 \bar s} \quad \mbox{ and } \quad \frac{d Z_0}{d \bar s} \sim - \frac 23 e^{- \frac 23 \bar s} \quad \mbox{ as } \bar s \to \infty .
\ee
	The differential equation of \eqref{A-SelfSimilarEquation3} in $\bar{r}>0$ transforms into the following equation in $\bar{s} \in \mathbb{R}$:
	\be\label{A-SelfSimilarEquation6}
	\textstyle{\frac{d^2Z_0}{d\bar{s}^2}+\left(n-\frac{2}{3}\right)\frac{dZ_0}{d\bar{s}}+\frac{2}{3}\left(n-\frac{4}{3}\right)Z_0=\frac{1}{Z_0^2}.}
	\ee
	We rewrite \eqref{A-SelfSimilarEquation6} as a system
	\be\label{A-SelfSimilarEquation6-S}
	\frac{d}{d\bar{s}}\begin{pmatrix}Z_0\\  Z_0'\end{pmatrix}=\begin{pmatrix}0\  & 1\\  \frac{2}{3}\left(\frac{4}{3}-n\right)\ & \frac{2}{3}-n\end{pmatrix}
	\begin{pmatrix}Z_0\\  Z_0'\end{pmatrix}+\begin{pmatrix}0\\  Z^{-2}_0\end{pmatrix} .
	\ee
	An equilibrium point of the system \eqref{A-SelfSimilarEquation6-S} is given by the equation
	\[\frac{d}{d\bar{s}}\begin{pmatrix}Z_0\\ Z_0'\end{pmatrix}=\begin{pmatrix}0\\ 0\end{pmatrix}.\]
	$(Z_0, Z_0')=(a, 0)$ is a possible solution, and an equilibrium point of the system \eqref{A-SelfSimilarEquation6-S} in the phase plane, provided $a=\left[\frac{2}{3}\left(n-\frac{4}{3}\right)\right]^{-\frac{1}{3}}$. Correspondingly, $a\eta^{\frac{2}{3}}$ is a possible solution of \eqref{A-SelfSimilarEquation6}. 

It is easily seen that in fact $(a,0)$ is the only equilibrium of \eqref{A-SelfSimilarEquation6-S},
that the solution of \eqref{A-SelfSimilarEquation6} satisfying condition \eqref{ICZ0}
has $Z_0 > 0$ for all $\bar s$, and that \eqref{A-SelfSimilarEquation6} has the
Lyapunov function $\displaystyle \frac 12 \left( \frac{d Z_0}{d \bar s} \right)^2 + 
\frac 13 \left( n - \frac 43 \right) Z_0^2 + \frac 1{Z_0}$. It is thus clear
that $(Z_0,Z_0') \to (a,0)$ as $\bar s \to \infty$.

We linearize \eqref{A-SelfSimilarEquation6-S} around the stable point $(Z_0, Z_0')=(a, 0)$ and obtain
	\be\label{A-SelfSimilarEquation6-SL}
	\frac{d}{d\bar{s}}\begin{pmatrix}Z_0\\  Z_0'\end{pmatrix}=\begin{pmatrix}0\  & 1\\  \frac{2}{3}\left(\frac{4}{3}-n\right)-\frac{2}{a^3}\ & \frac{2}{3}-n\end{pmatrix}\begin{pmatrix}Z_0\\  Z_0'\end{pmatrix}:=\mathcal{A}\begin{pmatrix}Z_0\\  Z_0'\end{pmatrix} .
	\ee
	The characteristic equation of the matrix $\mathcal{A}$ is given by
\begin{equation} \label{chareq}
\textstyle{\lambda\left(\lambda-\frac{2}{3}+n\right)-\frac{2}{3}\left(\frac{4}{3}-n\right)+\frac{2}{a^3} = \lambda\left(\lambda-\frac{2}{3}+n\right) + 2 \left(n - \frac{4}{3}\right) =0}.
\end{equation}
	Its roots give us the following for the eigenvalues of  $\mathcal{A}$
	\[\lambda_{\pm}=\textstyle{\frac{1}{3}-\frac{n}{2}\mp\sqrt{\Delta},\quad \Delta=\left(\frac{n}{2}-\frac{7}{3}\right)^2-\frac{8}{3}.}\]
	If $2 \le n \le 7$, then $\Delta<0$, $\lambda_{+}$ and $\lambda_{-}$ are complex eigenvalues with negative parts. Thus $(Z_0, Z_0')=(a, 0)$ is a stable spiral point.
	
	If $n\geq8$, then  $\Delta\geq0$, $\lambda_{+}$ and $\lambda_{-}$ are real and negative eigenvalues, hence  $(Z_0, Z_0')=(a, 0)$ is a stable node.
	
	The asymptotic form of a solution $Z_0$ to the equation \eqref{A-SelfSimilarEquation6} near the steady state is given by $Z_0\sim a+\widetilde{Z}_0$ for $\bar{s}\to\infty$, where $\widetilde{Z}_0=\widetilde{Z}_0(\bar{s})$ satisfies
	\be\label{A-SelfSimilarEquation8}
	\textstyle{\frac{d^2\widetilde{Z}_0}{d\bar{s}^2}+\left(n-\frac{2}{3}\right)\frac{d\widetilde{Z}_0}{d\bar{s}}+2\left(n-\frac{4}{3}\right)\widetilde{Z}_0=0.}
	\ee
	Solutions of \eqref{A-SelfSimilarEquation8} are of the form
	\begin{equation}\label{Z0}
		\widetilde{Z}_0(\bar{s})=Ae^{-l_+\bar{s}}+Be^{-l_-\bar{s}},
	\end{equation}
	where $A$ and $B$ are arbitrary constants and $l_+$ and $l_-$ are the roots of
	\[\textstyle{l_{\pm}^2-\left(n-\frac{2}{3}\right)l_{\pm}+2\left(n-\frac{4}{3}\right)=0.}\]
	Then $l_{\pm}=-\lambda_{\pm}$,
	\[l_{\pm}=\textstyle{\frac{n}{2}-\frac{1}{3}\pm\sqrt{\Delta}=\frac{n}{2}-\frac{1}{3}\pm bi,\quad \text{when}\ \frac{4}{3}<n<8,}\]
	\[l_{\pm}=\textstyle{\frac{n}{2}-\frac{1}{3}\pm\sqrt{\Delta}=\frac{n}{2}-\frac{1}{3}\pm b},\quad \text{when}\ n\geq8.\]
	Here, $b=\sqrt{|\Delta|}\geq0$ and $\Delta=\left(\frac{n}{2}-\frac{7}{3}\right)^2-\frac{8}{3}$. Hence, we obtain the asymptotic forms of $Z_0$
	\bse\label{Asym-Z}
	\be\label{Asym-Z1}
	Z_0(\bar{s})\sim a+A_1e^{\left(\frac{1}{3}-\frac{n}{2}\right)\bar{s}}\cos\left(b\bar{s}+A_2\right),\quad\text{when}\  2 \le n \le 7,
	\ee
	\be\label{Asym-Z2}
	Z_0(\bar{s})\sim a+A_3e^{\left(b+\frac{1}{3}-\frac{n}{2}\right)\bar{s}}+A_4e^{\left(-b+\frac{1}{3}-\frac{n}{2}\right)\bar{s}},\quad \text{when}\ n\geq8,
	\ee
	\ese
	for $\bar{s}\to\infty$. 
	Here, constants $A_i$, $i=1,\ 2,\ 3,\ 4,$ are independent of $c$ since both differential equation \eqref{A-SelfSimilarEquation6} and initial conditions \eqref{ICZ0} involve only the dimension $n$.

	Because $V \sim   (\bar{r})^{\frac{2}{3}}  Z_0$, and $\bar{r}=e^{\bar{s}}$, then, for $\bar r \gg 1$,
	\begin{align}
		V\sim &a(\bar{r})^{\frac{2}{3}}+A_1(\bar{r})^{1-\frac{n}{2}}\cos\left(b\ln\bar{r}+A_2\right) \, , 
\, \, \text{when}\, \,  2 \le n \le 7, \label{Asym-Y14}
	\end{align}
	\begin{align}
		V\sim &a(\bar{r})^{\frac{2}{3}}+(\bar{r})^{1-\frac{n}{2}}+\left(A_3(\bar{r})^b+A_4(\bar{r})^{-b}\right)  
		,\quad \text{when}\ n\geq8.\label{Asym-Y15}
	\end{align}
	Since $cV=v$ and $\eta=c^{\frac{3}{2}}\bar{r}$, we obtain the asymptotic form of $v$, when $2 \le n \le 7$,
	\begin{align}
		v\sim& \textstyle{a\eta^{\frac{2}{3}}+A_1c^{\frac{3}{4}n-\frac{1}{2}}\eta^{1-\frac{n}{2}}\cos\left(b\ln(\eta c^{-\frac{3}{2}})+A_2\right)}  , 
\label{Asym-v1}
	\end{align}
	and when $n\geq8$,
	\begin{align}
		v\sim& a\eta^{\frac{2}{3}}+c^{\frac{3}{4}n-\frac{1}{2}}\eta^{1-\frac{n}{2}}\left(A_3c^{-\frac{3}{2}b}\eta^b+A_4c^{\frac{3}{2}b}\eta^{-b}\right) . 
\label{Asym-v2}
	\end{align} 
	This concludes our discussion of the asymptotics in Proposition \ref{ATM1}.\\

	\subsubsection*{Proof of Lemma \ref{ATM3}}
	
	{Without loss of generality, we consider the case $\eta>1$. Let $v$ be a positive solution of the equation \eqref{SelfSimilarEquation}, and $v(1)=c_0^+>0$.}
	
	{We set $\hat{\eta}=\eta-1$, $\hat{v}(\hat{\eta})=v(\eta)$. Then $\hat{v}$ satisfies
		\be\label{RescaledEqnForAym1}
		\textstyle{-\hat{\eta}(\hat{\eta}+2)\frac{d^2\hat{v}}{d\hat{\eta}^2}+\left(\frac{n-1}{\hat{\eta}+1}-\frac{2}{3}(\hat{\eta}+1)\right)\frac{d\hat{v}}{d\hat{\eta}}+
			\frac{2}{9}\hat{v}-\frac{1}{\hat{v}^2},\quad \hat{v}(0)=C_1.}
		\ee
		The asymptotic expansion of the solution to \eqref{RescaledEqnForAym1} takes the form 
		\be\label{AsymPowerSeries}
		\textstyle{\hat{v}(\hat{\eta})\sim\sum_{\alpha \in E\cup \mathbb{N}_0}c_\alpha^+\hat{\eta}^{\alpha} = \hat{v}_{reg,+}(\hat{\eta}) + \hat{v}_{sing,+}(\hat{\eta}),}
		\ee
		where $\hat{v}_{sing,+}(\hat{\eta})\sim \sum_{\alpha \in E}c_\alpha^+\hat{\eta}^{\alpha}$. Here, $E \subset \mathbb{C}$ with $\min\{\mathrm{Re }\ \alpha : \alpha \in E\} >0$, and $c_{0}^+ > 0$ as above.   }
	
	{To determine the smallest ${\underline{\alpha}} \in E$, we substitute \eqref{AsymPowerSeries} into \eqref{RescaledEqnForAym1} and obtain
		\begin{align}
			&-\hat{\eta}\left(\hat{\eta}+2\right)\left(2\hat{c}_2^++\dots+{\underline{\alpha}}({\underline{\alpha}}-1)\hat{c}_{\underline{\alpha}}^+\hat{\eta}^{{\underline{\alpha}}-2}+[{\underline{\alpha}}+1]{\underline{\alpha}}\hat{c}_{{\underline{\alpha}}+1}^+\hat{\eta}^{{\underline{\alpha}}-1}+\dots\right)\notag\\
			&\hspace*{-0.1cm}+\hspace*{-0.15cm}\textstyle{\left[(n-1)(1-\hat{\eta}+\hat{\eta}^2\hspace*{-0.1cm}+\hspace*{-0.1cm}\dots\hspace*{-0.1cm})\hspace*{-0.1cm} - \hspace*{-0.1cm}\frac{2}{3}(\hat{\eta}+1)\right]}\hspace*{-0.1cm}(\hat{c}_1^++2\hat{c}_2^+\hat{\eta}\hspace*{-0.1cm}+\hspace*{-0.1cm}\dots\hspace*{-0.1cm}+{\underline{\alpha}}\hat{c}_{\underline{\alpha}}^+\hat{\eta}^{{\underline{\alpha}}-1}+({\underline{\alpha}}+1)\hat{c}_{{\underline{\alpha}}+1}^+\hat{\eta}^{{\underline{\alpha}}}+\hspace*{-0.1cm}\dots\hspace*{-0.05cm})\notag\\
			&\textstyle{+\frac{2}{9}\left(\hat{c}_0^++\hat{c}_1^+\hat{\eta}+\hat{c}_2^+\hat{\eta}^2+\dots+\hat{c}_{\underline{\alpha}}^+\hat{\eta}^{\underline{\alpha}}+\hat{c}_{{\underline{\alpha}}+1}^+\hat{\eta}^{{\underline{\alpha}}+1}+\dots\right)}\notag\\
			&\textstyle{-(\hat{c}_0^+)^{-2}+\frac{2\hat{c}_1^+}{(\hat{c}_0^+)^3}\hat{\eta}+\frac{2\hat{c}_2^+}{(\hat{c}_0^+)^3}\hat{\eta}^2
				+\dots+\frac{2\hat{c}_{\underline{\alpha}}^+}{(\hat{c}_0^+)^3}\hat{\eta}^{{\underline{\alpha}}}+\frac{2\hat{c}_{{\underline{\alpha}}+1}^+}{(\hat{c}_0^+)^3}\hat{\eta}^{{\underline{\alpha}}+1}+\dots=0}\notag.
		\end{align}
		Dots indicate higher-order terms. }
	
	{In order to find the  smallest non-integer exponent $\underline{\alpha}$ in the set $E$, we balance the terms of $\hat{\eta}^{{\underline{\alpha}}-1}$ and get $-2{\underline{\alpha}}({\underline{\alpha}}-1)+\left(n-\frac{5}{3}\right){\underline{\alpha}}=0$, or ${\underline{\alpha}}=\frac{n}{2}+\frac{1}{6}$.
		Hence,
		\[\hat{v}(\hat{\eta})\sim \hat{v}_{reg,+}(\hat{\eta}) +\hat{c}_{{\underline{\alpha}}}\hat{\eta}^{\frac{n}{2}+\frac{1}{6}}+\dots .\]
		{After possibly modifying the splitting $\hat{v} = \hat{v}_{reg,-}+\hat{v}_{sing,-}$, we may assume that $\hat{v}_{reg,+} = \hat{v}_{reg,-}$.} This concludes the proof of Lemma \ref{ATM3}.}

	\subsubsection*{Proof of Lemma \ref{ATM4}}
	The asymptotic expansion of the solution to \eqref{SelfSimilarEquation} takes the form $v(\eta)\sim\sum_{\alpha \in E}b_\alpha\eta^{\alpha}$ for some $E \subset \mathbb{C}$, and $b_{\alpha_0}\neq 0$ for some $\alpha_0$ of maximal real part. $\alpha_0$ is determined by substituting the expansion into \eqref{SelfSimilarEquation}: 
	\begin{equation}\label{inidicialeq}b_{\alpha_0}\textstyle{(-\alpha_0(\alpha_0-1) - \frac{2}{3}\alpha_0+\frac{2}{9})}\eta^{\alpha_0} -b_{\alpha_0}^{-2} \eta^{-2\alpha_0}  + \dots = 0,\end{equation}
	where dots indicate lower order terms. Depending on the sign of $\mathrm{Re}\ \alpha_0$, we obtain:\\
	When $\mathrm{Re}\ \alpha_0>0$, the nonlinear term $b_{\alpha_0}^{-2} \eta^{-2\alpha_0}$ is of lower order in \eqref{inidicialeq}, and $\alpha_0 = \frac{2}{3}$ is the unique positive root of $-\alpha_0(\alpha_0-1) - \frac{2}{3}\alpha_0+\frac{2}{9}=0$.   \\
	For $\mathrm{Re}\ \alpha_0 = 0$, \eqref{inidicialeq} leads to $\alpha_0 = 0$ and $b_0 = c^*$. Finally, for $\mathrm{Re}\ \alpha_0 < 0$, the nonlinear term is the leading term in \eqref{inidicialeq} and leads to the contradiction $b_{\alpha_0}^{-2}=0$.\\
	We conclude that either $\alpha_0=\frac{2}{3}$ or $\alpha_0 = 0$, and therefore the assertion of Lemma \ref{ATM4}.

	\subsection{Formal Proof of Global Existence of Self-Similar Solutions}

We now prove Theorem \ref{ATM5} formally.\\
	A smooth solution of \eqref{SelfSimilarEquation} is given by
	\begin{equation}\label{AsymSmoothSoln}
		v(\eta)\sim a\eta^{\frac{2}{3}}+ v_l(\eta).
	\end{equation}
	Here, $a$ is given in Proposition \ref{ATM1} and $v_l$ is a general solution to the linearized equation \eqref{AymLinearizedEqn}
	\be\label{AymLinearizedEqn}
	\textstyle{(1-\eta^2)\frac{d^2v_l}{d\eta^2}+\left(\frac{n-1}{\eta}-\frac{2}{3}\eta\right)\frac{dv_l}{d\eta}+\left(\frac{2}{9}+\frac{2}{v_s^3}\right)v_l=0,}
	\ee
which is obtained by linearizing \eqref{SelfSimilarEquation} around $v_s=a\eta^{\frac{2}{3}}$. Then $v_l$ can be found by first making a change of
variable, as earlier, {$s = \ln \eta$},  so that \eqref{AymLinearizedEqn} becomes, {using the definition of $v_s$,} 
	\begin{equation}\label{AymLinearizedEqns1}
\left( 1 - e^{2s} \right) \frac{d^2 v_l}{d s^2} +  \left( (n - 2) + \frac 13 e^{2s} \right) 
\frac{d v_l}{d s} + \left( \frac 43 \left( n - \frac 43 \right) + \frac 29 e^{2s} \right)
v_l = 0 ,
	\end{equation}
with $v_l$ regular at $s = 0$ {($\eta = 1$)}. We are interested in the asymptotic behaviour
of $v_l$ for $s \to - \infty$ ($\eta \to 0$), in particular for cases of $2 \le n \le 7$.

Note that the characteristic equation 
\[
\textstyle{\lambda^2 + (n - 2) \lambda + \frac 43 \left( n - \frac 43 \right) =0 .}
\]
associated with the limiting form of \eqref{AymLinearizedEqns1},
\[
\frac{d^2 v_l}{d s^2} +   \frac 13 e^{2s} \frac{d v_l}{d s} +  \frac 43 \left( n - \frac 43 \right) v_l = 0 ,
\]
got by taking $e^{2s}$ is small, {\it i.e.} $s$ large and negative,
has roots $(1 - \frac n2 ) \pm ib$, with $b = \sqrt{ \frac{n^2}4 - \frac 73 + 
\frac {25}9 }$. Then repeated use of variation of parameters to solve \eqref{AymLinearizedEqns1}, writing
\[
v_l = v_1 e^{(1 - \frac n2 )s} \cos bs + v_2 e^{(1 - \frac n2 )s} \sin bs \, ,
\]
gives for $2 \le n \le 7$, 
first that $v_1$ and $v_2$ are bounded for $s \to - \infty$, and then that
$v_1$ and $v_2$ both have limits as $s \to - \infty$. 

It follows that 
\[
v_l \sim B_1 e^{(1 - \frac n2 )s} \cos (bs + B_2) \, \mbox{ for } \, s \to - \infty
\]
and then that
	\begin{equation}\label{Asymv3}
		v \sim a\eta^{\frac{2}{3}}
+ B_1 \eta^{1 - \frac n2} \cos (b \ln \eta + B_2) \, \mbox{ for } \, \eta \to 0 \, ,
	\end{equation}
where $B_2$ is a constant determined by $n$. \footnote{It should be noted
that the ODE \eqref{AymLinearizedEqn} can be solved in terms of
hypergeometric functions, and the limiting behaviour \eqref{Asymv3}
got from this explicit solution. For example, in the case of $n=3$, we have
$v_l = C_l \eta^{1 - \frac n2} \mathrm{Im} \{ \beta_2 e^{i b \ln \eta} 
{_2\mathbf{F}_1} \left(\left[ \frac 23 - \frac n4 + \frac{i b}2, \frac 16 - \frac n4 + \frac{i b}2 \right], \left[ 1 + ib \right], \eta^2 \right)  \} $, $b$ as above with $n=3$, $\beta_2 = 
\left[ 6\Gamma \left( 1 - \frac{\sqrt{71}}6 i \right) \pi\sqrt{3} (-2)^{\frac 23} \right] \left/ \left[ 5\Gamma \left( \frac 23 \right) \Gamma \left( -\frac 1{12} - \frac{\sqrt{71}}{12}i \right) \Gamma \left( -\frac 7{12} - \frac{\sqrt{71}}{12} i \right) \right] \right.$ and $C_l$
is a real constant.}	

Matching \eqref{Asymv3} with \eqref{AsymForm1},
\[
v \sim a\eta^{\frac{2}{3}}+A_1c^{\frac{3}{4}n-\frac{1}{2}}\eta^{1-\frac{n}{2}}\cos\left(b\ln(\eta c^{-\frac{3}{2}})+A_2\right) 
= a\eta^{\frac{2}{3}} + A_1 c^{\frac{3}{4}n-\frac{1}{2}} \eta^{1-\frac{n}{2}}
\cos \left( b\ln\eta - \frac 32 b \ln c +A_2  \right) ,
\]
gives
	\begin{equation}\label{Matching4}
B_l =  A_1 c^{\frac{3}{4}n-\frac{1}{2}} , \quad
 B_2 + 2\pi k =  - \frac 32 b \ln c + A_2   ,
	\end{equation}
and
\begin{equation}\label{Matching4a}
- \ln c = \frac 2{3 b}(B_2 - A_2 + 2\pi k )
\end{equation}
where $k$ is a large positive integer.

	This results in a sequence of solutions $c_k$ with $c_k\geq0$.  The discrete set of such values $c_k$ for $c$ is what we look for.
	
	As we consider the smooth solution $v$ existing beyond $\eta=1$,  the integral formulation \eqref{IntBeyond1} in the proof of Lemma \ref{ExistenceCross1-G} leads to $\beta=0$. 
	
	Moreover, $v$ is monotonically increasing for all $\eta>0$, as we shall now demonstrate. 
	
	First, from the equation \eqref{SelfSimilarEquation}, it is easy to see $v$ is  monotonically increasing for  $\eta$ near $0$.  
	
	According to the formal argument above, with the asymptotic form of smooth $v$ for the  discrete  set  of  small  values $c_k\ll1$ of $c$,  $v$ remains 
close to $v^* = a \eta^{\frac{2}{3}}$ in bounded intervals such as $0 \leq \eta \leq \eta^* + \epsilon$, where $\eta^*$ is where $v^*$ crosses $c^*$: $a (\eta^*)^{\frac{2}{3}} = c^*$, $\eta^*>1$ for $2 \le n \le 7$, $\epsilon>0$ is small. 
	
Next note that, from the equation \eqref{SelfSimilarEquation}, $v$ has no local: \\
(a) maximum if $v < c^*$ and $0 < \eta < 1$; \quad (b) minimum if $v < c^*$ and $\eta > 1$; \\
(c) minimum if $v > c^*$ and $0 < \eta < 1$; \quad (d) maximum if $v > c^*$ and $\eta > 1$. \\

Now, for $c_k$ small enough (sufficiently large $k$), since $v$ is close to $v^*$,
it too will cross $c$ for the first time at some point $\eta^{**} > 1$: 
$v < c^*$ for $0 \le \eta < \eta^{**}$, $v = c^*$ at $\eta = \eta^{**}$.

In the interval $0 \le \eta \le 1$, $v < c^*$, $0 = v(0) < v(1)$, and by (a) above, there is no local maximum: Here $v$ is monotonic increasing.

In the interval $1 \le \eta < \eta^{**}$, $v < c^*$, $v(1) < v(\eta^{**})$, and by (b) above, there is no local minimum: Here $v$ is monotonic increasing.

Since $\eta^{**}$ is the first point at which $c^*$ is crossed, $v' (\eta^{**}) \ge 0$. By uniqueness of (regular) solutions of \eqref{SelfSimilarEquation}, $v' (\eta^{**}) \ne 0$, giving $v' (\eta^{**}) > 0$ and $v > c^*$ in a right neighbourhood of $\eta^{**}$. From (d) it follows that $v$ remains increasing for $\eta \ge \eta^{**}$.

	Hence, we conclude $v$ is increasing with respect to $\eta>0$ and $v>0$ for all $\eta>0$, as a result, the smooth solution $v$ exists globally and then, 
by Lemma~\ref{ATM4}, $v \sim b_{\frac 23} \eta^{\frac 23}$ for 
$\eta \to \infty$, with the value of $\sim b_{\frac 23}$ fixed by that of $c_k$.
	
	For $n\geq 8$, we don't know whether there is a smooth solution of the equation \eqref{SelfSimilarEquation}, according to the above analysis. 
	
	This concludes the formal proof of Theorem \ref{ATM5}.

\
\

Note that it \underline{might}, with some effort, be possible to make some of the arguments in the above proof rigorous by adapting arguments of \cite{collot2016stability} to otain error bounds for neglected terms in the formal asymptotic series. However, it should be noted that even if such work were to be successful, the results will be
expected to be weaker than those of \cite{collot2016stability} through the already noted propertiy of the key ODE \eqref{SelfSimilarEquation}: a global solution of the ODE cannot be expected for all initial values $c$; it is then not clear that the index $k$ will be identified with the number of times that solution $v_k$ crosses $v^*$.

	\section{Approximate Calculations of the Quenching Profile\label{sectionasymp}}
	
{	Having looked for self-similar solutions, which might serve to give the local form of quenching for solutions of \eqref{semi-linear-wave-equation} in the previous section, we now look at approximate solutions local to a quenching point. The relevance of the global self-similar solutions $v_j$ in Theorem \ref{ATM5} in \eqref{intro-5} hinges on their stability. }

{Should $u$ be only approximately self-similar, so that $v$ varies with a transformed time variable $\tau=-\ln(T_{\max}-t)$ as well as $\eta$, it is given by the PDE \eqref{SimialrityPDE} instead of the ODE \eqref{SelfSimilarEquation}: 
\begin{equation}\label{SimialrityPDE}
\frac{\partial^2v}{\partial\tau^2}+2\eta\frac{\partial^2v}{\partial\eta \, \partial\tau}-\frac{1}{3}\frac{\partial v}{\partial\tau}=(1-\eta^2)\frac{\partial^2v}{d\eta^2}+\left(\frac{n-1}{\eta}-\frac{2}{3}\eta\right)\frac{\partial v}{\partial\eta}+\frac{2}{9}v-\frac{1}{v^2} ;
\end{equation}
note that the new variable satisfies $\tau \to \infty$ as $t \to T_{\max}$.
To discuss stability of a true self-similar solution $v_j$, we interpret it as an equilibrium of the PDE \eqref{SimialrityPDE} for $v(\eta, \tau)$.}

{In this section go into this by showing formal and numerical evidence for the following conjecture in $n$ dimensions:
\begin{conjecture}\label{mainconj}
a) The trivial self-similar solution $v_0 = c^*$ to \eqref{SelfSimilarEquation} is stable as a solution of the PDE \eqref{SimialrityPDE}, excepting for (spatially uniform) perturbations which grow as exp $\tau$, which correspond to solutions quenching at different times.\\
b) The generic local behaviour of a {strictly increasing} radial quenching solution to \eqref{semi-linear-wave-equation-R} near the quenching time $T_{\max}$ is given by $u(r,t) = \left(T_{\max}-t\right)^{\frac{2}{3}}V(\xi,t)$, with $\xi=\frac{r}{(T_{\max}-t)^{1/2}}$ and
\begin{align}\label{QuenchingProfile}
	V(\xi,t) \sim C\xi^{\frac{4}{3}} \quad \text{{as $\xi \to \infty$} and } \quad V(0,t) \to c^*
\end{align}
for $t \to T_{\max}$. Correspondingly,
\begin{equation}
u(r,T_{\max}) \sim C r^{\frac{4}{3}} \quad \text{as $r \to 0$ \quad with }\quad
u(0,t) \sim c^* (T_{\max}- t)^{\frac 23} \text{ as t $\to T_{\max}$,}
\end{equation}
{provided $u$ is a strictly increasing function of $r$ near the quenching time.} Here $C$ is a constant which depends on $n$, and on the initial and boundary conditions for the semi-linear wave equation \eqref{semi-linear-wave-equation-R}. \\
c) For $2 \le n \le 7$, the smooth self-similar solutions $v_j$ to \eqref{SelfSimilarEquation} are unstable for $j \in \mathbb{N}$ as solutions of the PDE \eqref{SimialrityPDE}.
\end{conjecture}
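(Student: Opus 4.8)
A fully rigorous argument seems out of reach, which is why the statement is a conjecture: the relevant linearised operator is non-self-adjoint (because of the transport cross-term $2\eta\,\partial_\eta\partial_\tau$ in \eqref{SimialrityPDE}) and has two singular points, the origin $\eta=0$ and the light cone $\eta=1$. The realistic programme is to establish all three parts by matched asymptotics, supplemented by numerical integration of \eqref{SimialrityPDE} and of \eqref{semi-linear-wave-equation-R}.

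For part (a) the plan is to linearise \eqref{SimialrityPDE} about the constant equilibrium, writing $v=c^\ast+w$. Since $2/(c^\ast)^3=\tfrac49$, the linear equation reads
\[
\frac{\partial^2 w}{\partial\tau^2}+2\eta\frac{\partial^2 w}{\partial\eta\,\partial\tau}-\frac13\frac{\partial w}{\partial\tau}
=(1-\eta^2)\frac{\partial^2 w}{\partial\eta^2}+\Bigl(\frac{n-1}{\eta}-\frac23\eta\Bigr)\frac{\partial w}{\partial\eta}+\frac23 w .
\]
Separating variables, $w=e^{\mu\tau}\phi(\eta)$, yields a second-order ODE for $\phi$ with regular singular points at $\eta=0$ and $\eta=1$; imposing regularity at both (as in the local analysis behind Lemma~\ref{ATM3}) selects a discrete set of $\mu$. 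One checks directly that $\phi\equiv1$ solves it precisely for $\mu^2-\tfrac13\mu=\tfrac23$, i.e.\ $\mu=1$ or $\mu=-\tfrac23$, so the spatially uniform subspace carries exactly these two exponents; the mode $\mu=1$ is the time-translation mode generated by varying $T_{\max}$, which is the exceptional growing perturbation in the statement. It then remains to show every other admissible $\mu$ has $\mathrm{Re}\,\mu\le0$. I would attempt this by recasting the $\eta$-ODE in weighted Sturm--Liouville form on $(0,1)$, reading off the sign of the leading eigenvalue, treating $\eta>1$ separately, and observing (via Lemma~\ref{ATM4}) that a putative neutral mode $\mu=0$ would be a solution of \eqref{SelfSimilarEquation} near $c^\ast$ growing like $\eta$, hence inadmissible; where this is inconclusive, the claim is backed by numerically computing the spectrum.

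For part (b) the plan is to pass to the parabolic inner scaling $u=(T_{\max}-t)^{2/3}V(\xi,\tau)$, $\xi=r/(T_{\max}-t)^{1/2}$, $\tau=-\ln(T_{\max}-t)$. In these variables the Laplacian term picks up a prefactor $e^{-\tau}$, so as $\tau\to\infty$ the equation for $V$ on bounded $\xi$-sets degenerates to the dynamics of the spatially uniform ODE $\ddot u=-1/u^2$, whose quenching solution is $u\sim c^\ast(T_{\max}-t)^{2/3}$. Using part (a), the absence of spurious interior extrema listed after Theorem~\ref{ATM5}, and the strict-monotonicity hypothesis, one then argues $V(\xi,\tau)\to c^\ast$ uniformly on compact $\xi$-sets, giving $V(0,t)\to c^\ast$ and $u(0,t)\sim c^\ast(T_{\max}-t)^{2/3}$. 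For $\xi\to\infty$ the inner solution must match a $t$-independent outer profile $u(r,T_{\max})$, and $(T_{\max}-t)^{2/3}V(\xi)$ can be $t$-independent in the overlap region only if $V'/V\sim\tfrac43/\xi$, i.e.\ $V(\xi)\sim C\xi^{4/3}$; this fixes the exponent $4/3$ and yields $u(r,T_{\max})\sim C r^{4/3}$, with $C$ a free constant inherited from the global solution and from the initial/boundary data. The matched picture would then be confirmed by integrating \eqref{semi-linear-wave-equation-R} for several strictly increasing radial initial profiles and checking both power laws.

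For part (c) the plan is to linearise \eqref{SimialrityPDE} about $v_j$, again seeking $w=e^{\mu\tau}\phi(\eta)$, and to exhibit an unstable $\mu$ distinct from the time-translation one. Heuristically, the zeroth-order coefficient $\tfrac29+2/v_j^3$ exceeds its value $\tfrac23$ at $c^\ast$ wherever $v_j<c^\ast$, and by a large amount near $\eta=0$ since $v_j(0)=c_j$ is small and (by Theorem~\ref{ATM5} and the matching argument) $v_j$ crosses $v^\ast=c^\ast$ repeatedly; such potential wells support spatial ``bound states'' with positive energy, translating into exponents $\mathrm{Re}\,\mu>0$, and the more numerous/deeper the dips the more such modes, so a Sturm-oscillation / Morse-index count suggests each $v_j$, $j\ge1$, has at least one genuine instability. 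This would be corroborated by direct simulation of \eqref{SimialrityPDE} started near $v_j$ and by the one-dimensional numerics of \cite{TNLK}. The main obstacle throughout is exactly this spectral analysis: the transport cross-term destroys self-adjointness and the two singular points $\eta=0$ and $\eta=1$ obstruct a clean, rigorous eigenvalue count — which is why the result is posed as a conjecture rather than a theorem.
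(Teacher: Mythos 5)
Your proposal follows essentially the same strategy as the paper's supporting evidence (this is a conjecture, and the paper offers only formal asymptotics plus numerics): linearise \eqref{SimialrityPDE} about $c^*$ with separated modes, identify the spatially uniform exponents ($\mu=1$, the $T_{\max}$-shift, and $\mu=-\tfrac23$, matching the paper's $\lambda=-1$ and $\lambda=\tfrac23$), and then match an inner region in the parabolic variable $\xi=r/(T_{\max}-t)^{1/2}$ to an outer region governed by $\ddot u=-1/u^2$. Your time-independence argument in the overlap region is an acceptable, equivalent route to the exponent $\tfrac43$; the paper instead integrates $\ddot u=-1/u^2$ exactly and matches $A(r)\sim c_o a r^2$, which additionally ties the constant $C$ to the amplitude of a specific decaying mode. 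For part (c) you actually offer more than the paper, which simply cites the one-dimensional results of \cite{TNLK}.

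The genuine gap is in part (a), at exactly the step you defer. The paper does not attempt a Sturm--Liouville or spectral-positivity argument (which, as you note, is obstructed by the cross-term and the singular points); instead it seeks even power-series eigenfunctions $\varphi_\lambda(\eta)=\sum_k a_k\eta^{2k}$ of \eqref{QuenchingEq} and observes that the series has radius of convergence $1$ --- i.e.\ the mode is singular on the light cone $\eta=1$ --- unless it terminates. Termination quantizes the decay rates to $\lambda_k^-=2k-1$ and $\lambda_k^+=2k+\tfrac23$, so that \emph{every} admissible smooth mode other than $\lambda=-1$ decays; this is the concrete computation that makes the stability claim plausible, and it is absent from your plan. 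It also supplies the ingredient your part (b) is missing: the slowest non-uniform mode $\lambda=1$, $\varphi\propto\eta^2+3n$, becomes comparable to $c^*$ precisely when $\eta\sim(T_{\max}-t)^{-1/2}$, which is how the paper \emph{derives} the $\xi$-scaling rather than postulating it. Without the termination/quantization step, your part (a) rests on an inconclusive spectral recasting plus numerics, and your part (b) assumes the inner length scale it should be explaining.
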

The behaviour in c) is supported by the formal and numerical results in the paper \cite{TNLK} for the problem in one space dimension. }

We start in Subsec.~\ref{subsec:numerics} by looking at numerical solutions of the PDE \eqref{semi-linear-wave-equation-R} for dimension $n=2$. In the Subsecs.~\ref{SS:inner} - \ref{SS:matching} we consider a formal asymptotic solution of the PDE in any dimension $n$.
	
	\subsection{Numerical Solution of the PDE}\label{subsec:numerics}

\begin{figure}[t]
                \begin{center}
                        {\includegraphics[width=0.4\textwidth]{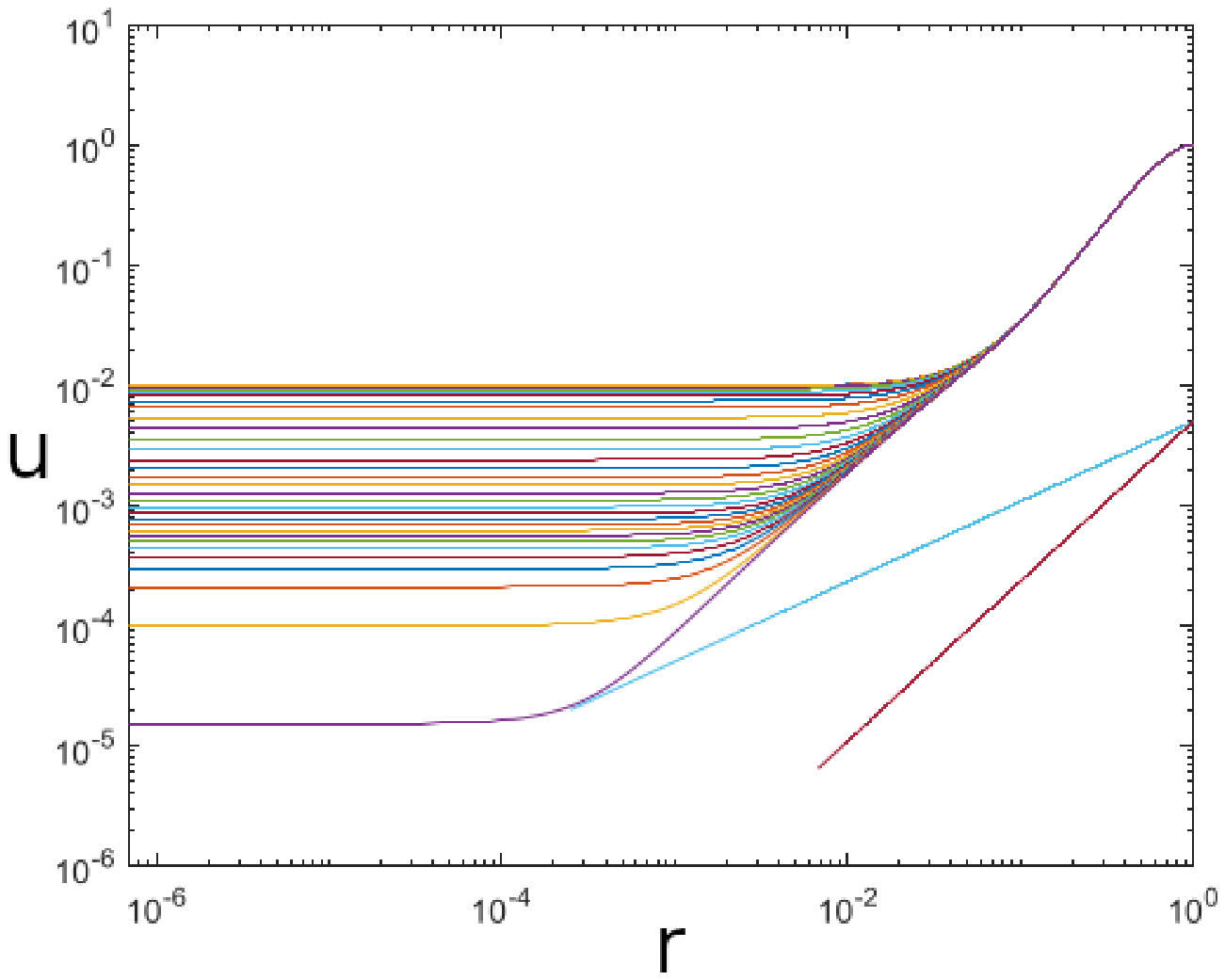}}\hspace*{0.55cm}
                        {\includegraphics[width=0.39\textwidth]{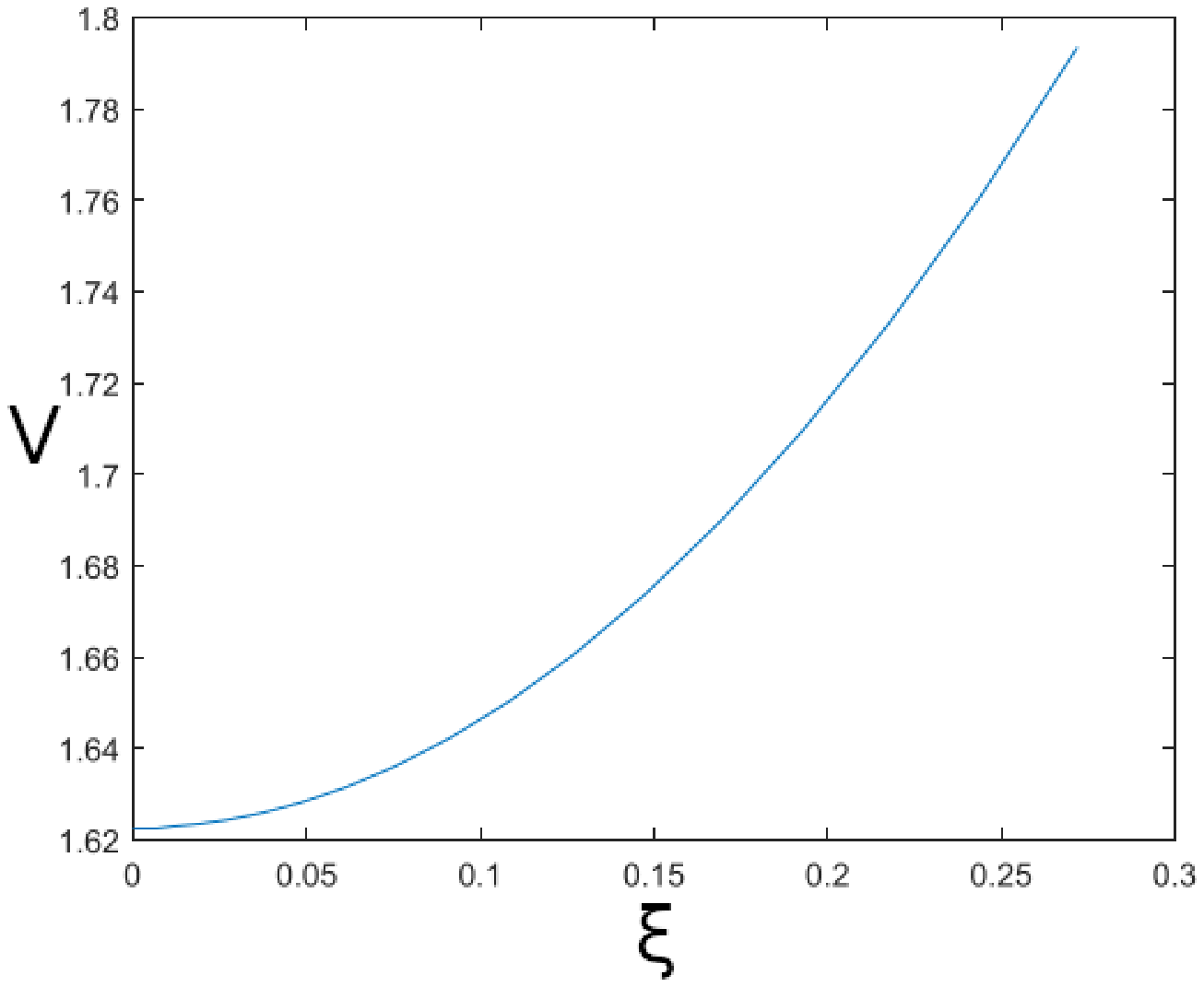}}
                        \caption{(A) Solution $u$ for $a=1$, (B) rescaled solution $V$ for small $\xi$.}
                        \label{fig1}
                \end{center}
        \end{figure}

        \begin{figure}[t]
                \begin{center}
                        {\includegraphics[width=0.46\textwidth]{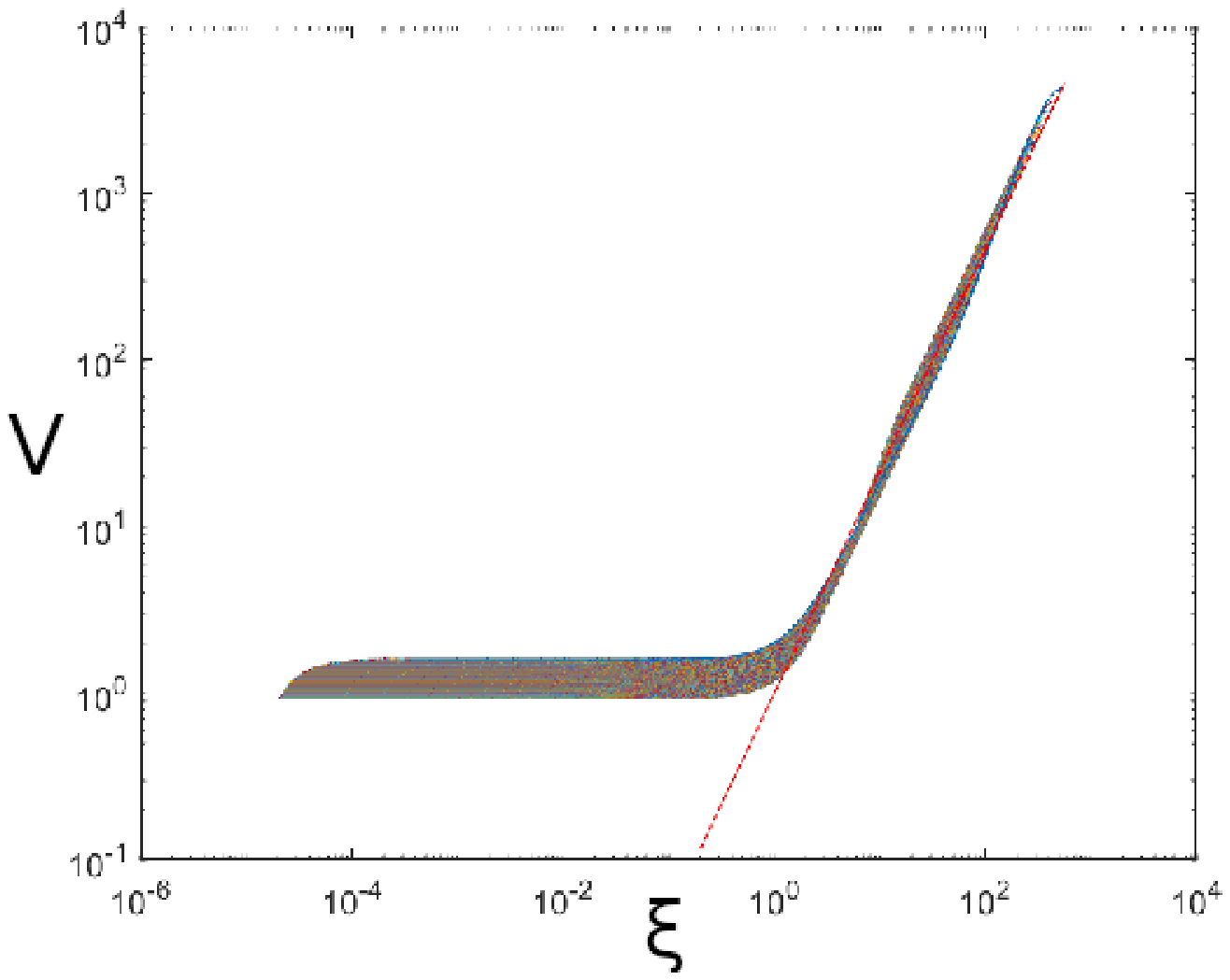}}%\hspace*{0.3cm}
                        {\includegraphics[width=0.46\textwidth]{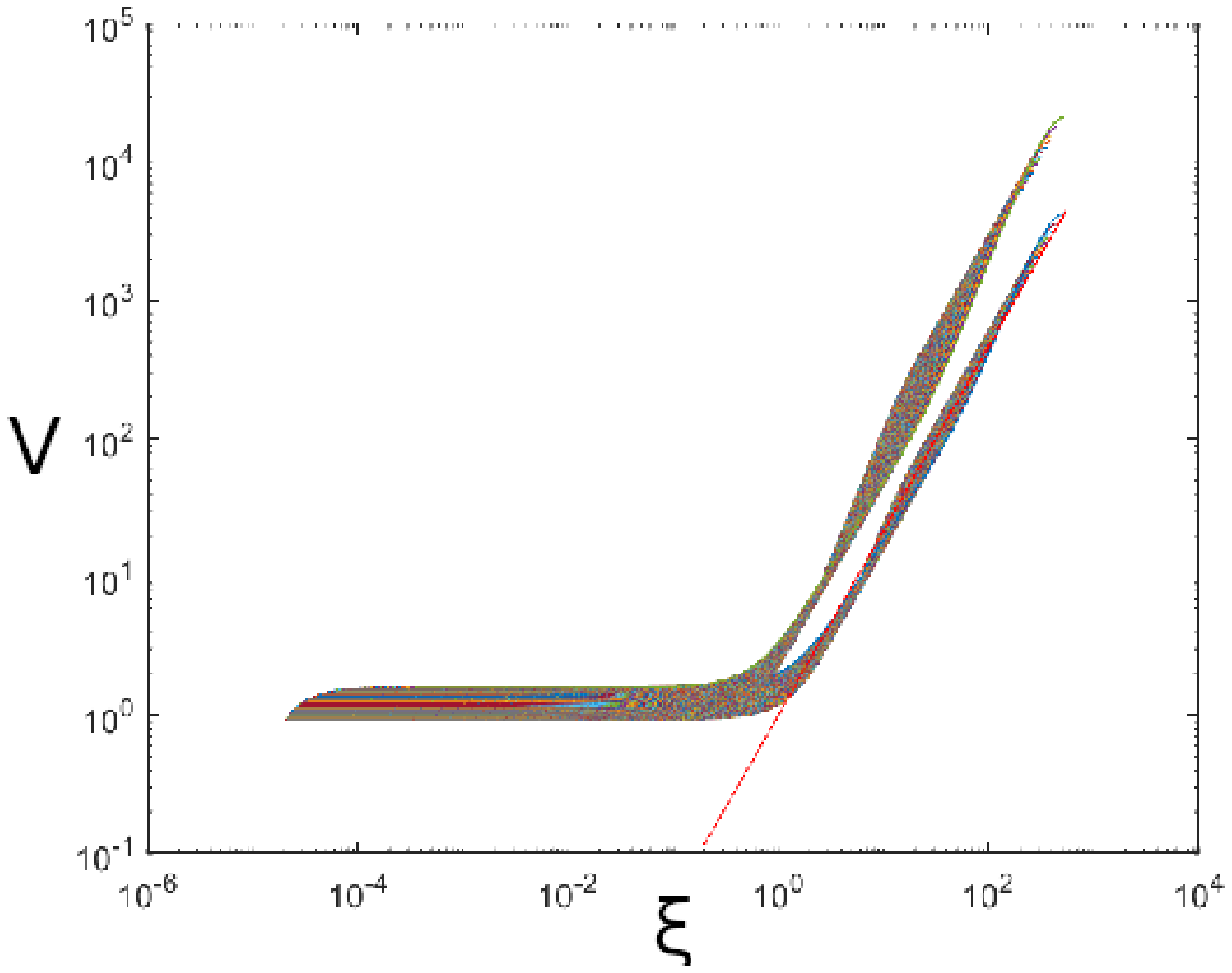}}
                        \caption{Rescaled solution $V$ for (A) $a=1$, (B) $a=1$ and $a=5$.}
                        \label{fig2}
                \end{center}
        \end{figure}

	We illustrate the behaviour of radial solutions near the quenching time with numerical experiments. We consider the semi-linear wave equation \eqref{semi-linear-wave-equation} in the unit ball $\Omega = \{x \in \mathbb{R}^2 : |x|<1\}$ with homogeneous Neumann boundary conditions $\partial_\nu u = 0$ on $\partial \Omega$.  The initial condition is of the form
	$$u(x,0) = \textstyle{\epsilon+a \sin\left(\frac{\pi |x|}{2}\right)^2,\quad \frac{\partial u}{\partial t}(x,0) =0\qquad (x \in \Omega)} ,$$
	where $\epsilon = 10^{-2}$ and $a\in \{1,5\}$. {As we are looking at the local behaviour near the quenching point,}  the  following observations are independent of the type of boundary condition and the precise boundary data. They seem to reflect the behaviour of the solution not only in this special case, but for generic initial conditions in the numerically feasible dimensions $n=2, 3,4,5$. The results refine and complement those obtained in  \cite{TNLK} for dimension $n=1$. 

We discretise the weak formulation of \eqref{semi-linear-wave-equation} by piecewise linear finite elements in $r \in (0,1)$ on an algebraically graded mesh with nodes $r_j = \left(\frac{j}{N}\right)^2$, $0\leq j \leq N$, $N$ fixed. Note that integration in polar coordinates involves a Jacobian $r^{n-1}$, with resulting numerical difficulties for $n \geq 6$. An implicit Euler method with variable time step is used for the time discretisation. Note that the numerical method becomes ill-conditioned with increasing dimension $n$.
	
	For $a=1$, Figure \ref{fig1}(A) shows $u$ as a function of the radius $r=|x|$ at several times $t$  close to $T_{\max}$. Plotting both $u$ and $r$ in a logarithmic scale, a solution $u \sim \widetilde{C} r^{\alpha}$ corresponds to a straight line of slope $\alpha$. The straight red line corresponds to $\alpha = \frac{4}{3}$, in agreement with the behaviour of $u$ {at all times $t$}. The straight blue line depicts the behaviour expected for non-trivial self-similar solutions, where $\alpha = \frac{2}{3}$.
	
	Figure \ref{fig2}(A) depicts $V =  (T_{\max}-t)^{-2/3} u(r,t)$ as a function of the rescaled radius $\xi = \frac{r}{(T_{\max}-t)^{1/2}}$, again in logarithmic scales. As $t \to T_{\max}$, $V$ approaches a limiting profile, indicating an asymptotically self-similar evolution. The straight red line agrees with $V(\xi,t) \sim C \xi^{4/3}$ as $t \to T_{\max}$. Note the scaling of the variable $\xi$ in agreement with Conjecture \ref{mainconj}, rather than \eqref{SelfSimilarVariables}.
	
	Figure \ref{fig2}(B) compares the rescaled solutions $V =  (T_{\max}-t)^{-2/3} u(r,t)$ for initial data corresponding to $a=1$, respectively $a=5$. In both cases $V(\xi,t) \sim C \xi^{4/3}$ as $t \to T_{\max}$, but the constant $C$ depends on the initial condition. 
	
	Figure \ref{fig1}(B) plots $V=  (T_{\max}-t)^{-2/3} u(r,t)$ near $r = 0$, when $a=1$ and $t$ is the last time step before $T_{\max}$. For $t \to T_{\max}$ and $\xi \to 0$, $V$ is independent of $t$, and the limiting value $V(0,t)\simeq 1.62$ is compatible with $c^*=\left(\frac{9}{2}\right)^{\frac{1}{3}}\approx1.65$.

	\subsection{Formal Asymptotics: Inner Solution\label{SS:inner}} 
	
	``Inner solution" refers to an asymptotic form of a quasi-self-similar solution $v$ satisfying \eqref{SimialrityPDE} for $\eta$ not large.  We assume that $v$ has an asymptotic form 
	\[v\sim c^*+\underbar{V},\quad \text{with}\quad \lim_{\tau\to\infty}\underbar{V}(\eta, \tau)=0.\]
	We look for an asymptotic form of $\underbar{V}$, $\underbar{V}\sim e^{-\lambda\tau}\varphi_\lambda(\eta)$, where $\varphi_\lambda$ satisfies
	\begin{equation}\label{QuenchingEq}
		(1-\eta^2)\frac{d^2\varphi}{d\eta^2}+\left[2\left(\lambda-\frac{1}{3}\right)\eta+\frac{n-1}{\eta}\right]\frac{d\varphi_\lambda}{d\eta}+\left(\frac{2}{3}-\frac{\lambda}{3}-\lambda^2\right)\varphi_\lambda=0.
	\end{equation}
	We seek an even solution of  \eqref{QuenchingEq} as a power-series given by 
	\begin{equation}\label{PSSoln}
		\varphi_\lambda(\eta)=\textstyle{\sum_{k=0}^{\infty} a_{k}\eta^{2k}.}
	\end{equation}
	Plugging \eqref{PSSoln} into \eqref{QuenchingEq}, after some manipulations and ratio test, we see that the radius of convergence of the power series for $\varphi_\lambda(\eta)$ is $1$, consistent with $\varphi_\lambda$ having a singularity at $\eta=1$,  the solution \eqref{PSSoln} is not smooth unless it terminates. The series \eqref{PSSoln} terminates, with $\varphi_\lambda$ being a polynomial, and hence smooth for all $\eta$,  for $\lambda=\lambda_k^{\pm}$ with $\lambda_k^{-}=2k-1$ for $k=0,\ 1,\ 2$,\dots and $\lambda_k^{+}=2k+\frac{2}{3}$ for $k=0,\ 1,\ 2$,\dots \\
	The case $k=0$ and $\lambda=-1$ corresponds to a shift in $T_{\max}$, which we ignore.\\
	For $k=0$ and $\lambda=\frac{2}{3}$, $\underbar{V}$ is a slower shrinking solution, uniform in $\eta$, and the asymptotic form of $\underbar{V}$ is given by
	\[\underbar{V}\sim be^{-\frac{2}{3}\tau},\quad \text{where}\ b\ \text{is an arbitrary constant}.\]
	For $k=1$ and $\lambda=1$, the asymptotic form of $\underbar{V}$ is quadratic in $\eta$,
	\[\underbar{V}\sim ae^{-\tau}\left(\eta^2+3n\right),\quad \text{where}\ a\ \text{is an arbitrary constant}.\]
	We consider these two terms, as they are the slowest decaying in time, and therefore dominant. Other contributions decay more rapidly and we neglect them. As a result, 
	\begin{equation}\label{AsymPSSoln}
		v\sim c^*+be^{-\frac{2}{3}\tau}+ae^{-\tau}\left(\eta^2+3n\right)+\dots.
	\end{equation}
	Returning to earlier variables \eqref{SelfSimilarVariables}, for $t\to T_{\max}$, $\eta\to\infty$,  we obtain
	\begin{align}
		u\sim& c^*\left(T_{\max}-t\right)^{\frac{2}{3}}+b\left(T_{\max}-t\right)^{\frac{4}{3}}+a\left(T_{\max}-t\right)^{\frac{5}{3}}\left(\eta^2+3n\right)+\dots \notag\\
		\sim&\left(T_{\max}-t\right)^{\frac{2}{3}}\left(c^*+b\left(T_{\max}-t\right)^{\frac{2}{3}}+a\left(T_{\max}-t\right)\eta^2+\dots\right)\notag\\
		\sim&\left(T_{\max}-t\right)^{\frac{2}{3}}\left(c^*+b\left(T_{\max}-t\right)^{\frac{2}{3}}+a\left(T_{\max}-t\right)^{-1}r^2+\dots\right),\label{InnerSoln}
	\end{align}
	for $t\to T_{\max}$, $r\to 0$. Here terms $c^*$ and $a\left(T_{\max}-t\right)^{-1}r^2$ are in the same size for $\eta\sim O\left[\left(T_{\max}-t\right)^{-\frac{1}{2}}\right]$, $\eta\to \infty$ i.e. $r\sim O\left[\left(T_{\max}-t\right)^{\frac{1}{2}}\right]$, $r\to 0$.
	
	This indicates that there is significant behaviour of the solution $v$ of \eqref{SimialrityPDE} and $u$ for large $\eta$ of the size $\left(T_{\max}-t\right)^{-\frac{1}{2}}$.
	
	\subsection{Formal Asymptotics: Outer Solution\label{SS:outer}}
	
	We want an approximation for a solution of equation \eqref{SimialrityPDE} valid for large $\eta$. 
	
	For large $\eta\gg1$, terms $\frac{\partial^2v}{\partial\eta^2}$ and $\frac{n-1}{\eta}\frac{\partial v}{\partial\eta}$ in \eqref{SimialrityPDE} are relatively small, i.e. we might neglect the Laplacian term $\Delta u$ in the original equation \eqref{semi-linear-wave-equation} to get 
	\begin{equation}\label{OuterEq}
		\textstyle{\frac{d^2u}{dt^2}=-\frac{1}{u^2}}
	\end{equation}
	which has the exact solution given by (see (6.7) in \cite{TNLK})
	\begin{equation}\label{OuterSoln}
		\textstyle{T_{\max}-t+A=\frac{u^{\frac{1}{2}}}{B}\left(2+Bu\right)^{\frac{1}{2}}-2B^{-\frac{3}{2}}\ln\left[\left(\frac{Bu}{2}\right)^{\frac{1}{2}}+\left(1+\frac{Bu}{2}\right)^{\frac{1}{2}}\right],}
	\end{equation}
	where $A=A(r)>0$, $B=B(r)>0$ with $\displaystyle\lim_{r\to 0}A(r)=\lim_{r\to 0}B(r)=0$.
	
	Taking $u\to 0$ in \eqref{OuterSoln} and doing the appropriate eliminations give 
	\begin{equation}\label{OuterSoln2}
		\textstyle{T_{\max}-t+A(r)\sim \frac{\sqrt{2}}{3}u^{\frac{3}{2}}+c_o B(r)u^{\frac{5}{2}}+\dots}
	\end{equation}
	Here and the following parts,  $c_o$ denotes a number, no physical dependence. This implies 
	\[	\textstyle{u\sim \left[\frac{3}{\sqrt{2}} (T_{\max}-t)+\dots\right]^{\frac{2}{3}}.}\]
	In more detail
	\begin{align}
		u\sim&\left[\frac{3}{\sqrt{2}} (T_{\max}-t)+A(r)+c_oB(r)(T_{\max}-t)^{\frac{5}{3}}+\dots\right]^{\frac{2}{3}}\notag\\
		\sim&c^*(T_{\max}-t)^{\frac{2}{3}}\left[1+c_oA(r)(T_{\max}-t)^{-1}+c_oB(r)(T_{\max}-t)^{\frac{2}{3}}+\dots\right]^{\frac{2}{3}}\notag\\
		\sim&(T_{\max}-t)^{\frac{2}{3}}\left[c^*+c_oA(r)(T_{\max}-t)^{-1}+c_oB(r)(T_{\max}-t)^{\frac{2}{3}}+\dots\right].\label{OuterSoln3}
	\end{align}
	Another way for solving the equation \eqref{OuterEq} can be seen in \cite{TNLK}.
	
	\subsection{Formal Asymptotics: Matching\label{SS:matching}}
	
	Recall the approximation for the inner region is of the form \eqref{InnerSoln} and the the approximation for the outer region has the form \eqref{OuterSoln3}. In an intermediate region, the inner solution \eqref{InnerSoln} and the approximation \eqref{OuterSoln3} to outer solution should be the same, which implies 
	\[B(r)\sim c_ob\quad \text{for}\ r\to 0,\quad A(r)\sim c_oar^2\quad \text{for}\ r\to 0.\]
	\subsection{Quenching Profile} Taking $t\to T_{\max}$ in \eqref{OuterSoln} gives 
	\[A=\textstyle{\frac{u^{\frac{1}{2}}}{B}\left(2+Bu\right)^{\frac{1}{2}}-2B^{-\frac{3}{2}}\ln\left[\left(\frac{Bu}{2}\right)^{\frac{1}{2}}+\left(1+\frac{Bu}{2}\right)^{\frac{1}{2}}\right].}\]
	Then for $r\to 0$ and $u\to 0$, we obtain 
	\[c_oar^2\sim \textstyle{\frac{\sqrt{2}}{3}u^{\frac{3}{2}}}\]
	as \eqref{OuterSoln2}. Thus, the local profile of the solution at quenching time $t=T_{\max}$ is 
	\[u\sim c_oa^{\frac{2}{3}}r^{\frac{4}{3}}.\]
	The coefficient $a$ is  determined by the initial and boundary conditions of the
	equation \eqref{semi-linear-wave-equation}.
	
	This gives us an $r^{\frac{4}{3}}$ dependence of the solution profile near the quenching point $r=0$, which agrees with the numerical results in Section \ref{subsec:numerics}.

%%%%%%%%%%%%%%%%%%%%%%%%%%%%%%%
	
	\section{Discussion\label{Disc}}

%%%%%%%%%%%%%%%%%%%%%%%%%%%%%%%%%%

Motivated by touch-down singularities in MEMS capacitors, we have, in this article, studied the (radial) profile of solutions to the semi-linear wave equation \eqref{semi-linear-wave-equation} near a quenching time. The results are related to classical works on self-similar solutions for the stationary problem \cite{gel1959some, JL}. They have led to a detailed conjecture for quenching behaviour in \eqref{semi-linear-wave-equation}, with a different scaling from that in recent rigorous work on blow-up \cite{donninger2012stables, merle2019blow}.
	
	In Section \ref{GloExist} we found that there were non-trivial smooth self-similar solutions which could, potentially, give the local profile of quenching solutions to the semi-linear wave equation \eqref{semi-linear-wave-equation} in dimension $n$ from 2 to 7, the dimension $n=2$ being physically relevant for touchdown in MEMS devices. Quenching governed by such a self-similar solution would produce a local quenching profile $u(r,T_{\max}) \sim C_{nm} r^{\frac23}$ for $r \to 0$, with the positive constant $C_{nm}$ fixed by dimension $n$ and index $m$ specifying the particular similarity solution, equivalent to a specific value of the shooting parameter $c=v(0)$ in Section~\ref{GloExist}.
	
	However, in Section \ref{sectionasymp} we found that both numerical solutions and formal asymptotics indicate that these non-trivial self-similar solutions ($C_{nm} > 0$) are unstable, while the trivial, constant solution is stable: generically, the solution to \eqref{SimialrityPDE} satisfies $v\to c^*=\left(\frac{9}{2}\right)^{\frac{1}{3}}$ as $\tau \to \infty$ for fixed $r$. This then means that, at least for radially symmetric cases, as quenching is approached, i.e.~$t \to T_{\max}$, $u(0,t) \sim c^* t^{\frac23}$, and that at quenching time, $t = T_{\max}$, $u$ has a flatter local profile, $u(r,T_{\max}) \sim C r^{\frac43}$ for $r \to 0$, with the positive constant $C$ fixed by the imposed initial and boundary conditions. These results lend support to the detailed behaviour of the solution proposed in Conjecture \ref{mainconj} at the quenching time. Such less sharp local behaviour near a quenching point gives, at the quenching time, a nonlinear term in \eqref{semi-linear-wave-equation} of size $r^{- \frac 83}$, whose integral is infinite for $n=1$, 2.  This could suggest that, for these physically important cases, any continuation of the solution beyond quenching time might require significant modification of the model.
	
	\appendix
	\section{Local Existence of Self-Similar Solutions}\label{LocExist}
	
	%In this section we prove Proposition \ref{Existence of Self-Similar Solution for n>1}. We divide this section into four parts to illustrate the proof of Proposition \ref{Existence of Self-Similar Solution for n>1}. In section \ref{LocExistSec1}, we reformulate the self-similar equation \eqref{SelfSimilarEquation} with the initial values starting at $\eta=\eta_{0}$  as an integral equation. The Sections \ref{LocExistSec2} and \ref{LocExistSec3}  prove the local existence of a unique solution to \eqref{SelfSimilarEquation} for $\eta_{0}$ near $1^{-}$ and $0$ respectively. In section \ref{LocExistSec4}, we prove there exists a family of solutions to \eqref{SelfSimilarEquation} for $\eta_{0}=1$ locally.
	{In this Appendix we prove} the local solvability of the initial value problem given by \eqref{SelfSimilarEquation}, \eqref{InitialValue0} {in any dimension $n$}. 
	\begin{proposition}\label{Existence of Self-Similar Solution for n>1}
		(A) Equation \eqref{SelfSimilarEquation} with the initial condition \eqref{InitialValue0} admits a solution $v$ with maximal existence interval $[0,\ \eta_{\max})$. %It corresponds to a solution of the integral equation \eqref{IntegralSelfSimilarEquation} below under the transformation \eqref{Transformation2}. \\
		
		(B) If $\eta_{\max}<\infty$, then
		\[\textstyle{\lim_{\eta\rightarrow \eta_{\max}^-}}v(\eta)=0.\]

		(C) If $\eta_{max} \leq 1$, the solution $v$ is unique.
		
		(D) If $\eta_{max} > 1$, there exists a $1$-parameter family of solutions $\{v_\beta\}_{\beta \in \mathbb{R}}$ to \eqref{SelfSimilarEquation} with initial condition \eqref{InitialValue0}. $\eta_{\max}$ depends on both $v_\beta(0)$ and $\beta$. $v_\beta$ is independent of $\beta$ in the intervall $[0,1]$, and it admits a decomposition $v_\beta = v_{\beta,reg}+v_{\beta,sing}$ into a smooth  and a singular part at $\eta=1^+$. We have $$v_{\beta,sing}(\eta) =  \textstyle{\sum_{\alpha \in  E}c_\alpha (\eta-1)^\alpha}\ ,$$ whose coefficients $c_\alpha$ are determined by $\beta$, and the smallest exponent $\alpha \in E$ is given by $\alpha=\frac{n}{2}+\frac{1}{6}$.
	\end{proposition}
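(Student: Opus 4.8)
The plan is to treat the three distinguished values $\eta=0$, the light cone $\eta=1$, and $\eta=\infty$ separately, using a fixed-point construction near $\eta=0$, an energy identity for the blow-up dichotomy, and a Frobenius analysis at $\eta=1$. For the local solvability in part (A) the key observation is that \eqref{SelfSimilarEquation} has the integrating factor $p(\eta)=\eta^{\,n-1}(1-\eta^{2})^{\frac56-\frac n2}$, so on $0<\eta<1$ it is equivalent to $(p\,v')'=\eta^{\,n-1}(1-\eta^{2})^{-\frac n2-\frac16}\bigl(v^{-2}-\tfrac29 v\bigr)$. Integrating twice from $\eta=0$ and using \eqref{InitialValue0} turns the initial-value problem into the integral equation
\[
v(\eta)=c+\int_{0}^{\eta}t^{1-n}(1-t^{2})^{\frac n2-\frac56}\left(\int_{0}^{t}s^{\,n-1}(1-s^{2})^{-\frac n2-\frac16}\bigl(v(s)^{-2}-\tfrac29 v(s)\bigr)\,ds\right)dt .
\]
Since the inner integral is $O(t^{n})$ as $t\to0^{+}$, the outer integrand is $O(t)$, the operator is well defined on $C([0,\delta])$, and on the ball $\{\,\|w-c\|_{\infty}\le c/2\,\}$ it is a contraction for $\delta$ small (the nonlinearity being Lipschitz where $w\ge c/2$); this gives a unique $C^{2}$ solution near $\eta=0$, with $v'(0)=0$ automatic. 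On $\{\eta>0,\ \eta\ne1,\ v>0\}$ the equation is a regular second-order ODE with locally Lipschitz right-hand side, so Picard--Lindel\"of extends the solution to a maximal interval $[0,\eta_{\max})$, the passage through $\eta=1$ being supplied by the analysis of (C)--(D) below.

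For the blow-up alternative (B), multiplying \eqref{SelfSimilarEquation} by $v'$ yields $\tfrac{d}{d\eta}\mathcal E(\eta)=-\bigl(\tfrac\eta3+\tfrac{n-1}\eta\bigr)(v')^{2}\le0$, where $\mathcal E(\eta)=\tfrac{1-\eta^{2}}{2}(v')^{2}+\tfrac19 v^{2}+\tfrac1v$. On $[0,1)$ every term of $\mathcal E$ is nonnegative, hence $v^{-1}\le\mathcal E(0)=\tfrac{c^{2}}{9}+\tfrac1c$, so $v$ is bounded above and below there; in particular the solution cannot cease to exist before $\eta=1$ and reaches $\eta=1$ with $v(1^{-})>0$. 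For $\eta>1$, from $(\eta^{2}-1)v''=(\tfrac{n-1}\eta-\tfrac23\eta)v'+\tfrac29 v-v^{-2}$, a Gronwall estimate for $|v|+|v'|$ shows that on any bounded interval on which $v\ge m>0$ both $v$ and $v'$ remain bounded; therefore $\eta_{\max}<\infty$ forces $\liminf_{\eta\to\eta_{\max}}v=0$. To improve this to $v(\eta)\to0$, note that at any interior local minimum of $v$ with $v<c^{*}=(9/2)^{1/3}$ one would have $(\eta^{2}-1)v''=\tfrac29 v-v^{-2}<0$, which is impossible for $\eta>1$; since $\liminf v=0<c^{*}$, the solution must be eventually monotone decreasing near $\eta_{\max}$, so $v\to0$.

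Parts (C) and (D) concern the regular singular point $\eta=1$. Inserting $v\sim(\eta-1)^{\alpha}$ into \eqref{RescaledEqnForAym1}, exactly as in the proof of Lemma \ref{ATM3}, gives the indicial equation $\alpha\bigl(-2(\alpha-1)+n-\tfrac53\bigr)=0$ with roots $\alpha=0$ and $\alpha=\tfrac n2+\tfrac16$; these never differ by an integer, so there is a Frobenius basis $\{\phi_{0},\phi_{1}\}$ with $\phi_{0}$ analytic, $\phi_{0}(1)\ne0$, and $\phi_{1}(\eta)\sim(\eta-1)^{\frac n2+\frac16}$ (no logarithms). On $[0,1]$ the solution is the one built in (A), uniquely determined by \eqref{InitialValue0}, which gives (C): when $\eta_{\max}\le1$ no extra freedom can appear. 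For a solution continued past $\eta=1$, the analytic part $v_{reg}$ across $\eta=1$ is fixed by the Cauchy data $v(1^{-}),v'(1^{-})$, while the coefficient $\beta=c_{\frac n2+\frac16}^{+}$ of the singular mode $\phi_{1}$ on $\eta>1$ is free: for $n\ge2$ we have $\tfrac n2+\tfrac16>1$, so $\phi_{1}$ and $\phi_{1}'$ vanish at $\eta=1$ and cannot affect the matching (for general $n$ the same freedom is read off directly from the integral reformulation of the equation on $\eta>1$). This produces the one-parameter family $v_{\beta}=v_{reg}+\beta\,\phi_{1}+\dots$, which agrees with the unique solution on $[0,1]$ independently of $\beta$, whose singular part $v_{\beta,sing}(\eta)=\sum_{\alpha\in E}c_{\alpha}(\eta-1)^{\alpha}$ has all coefficients determined recursively from $\beta$ by substitution into \eqref{SelfSimilarEquation}, and whose maximal interval — obtained by running the $\eta>1$ flow and invoking part (B) — depends on both $c=v_{\beta}(0)$ and $\beta$; this is (D).

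The delicate step throughout is the well-posedness structure at the degenerate point $\eta=1$: verifying that the left-hand solution joins its own analytic part with matching value and first derivative there, that the indicial exponents $0$ and $\tfrac n2+\tfrac16$ cannot resonate, and that $\beta$ is genuinely unconstrained rather than pinned down by the matching. The only other point easily overlooked is the promotion of $\liminf_{\eta\to\eta_{\max}}v=0$ to $\lim_{\eta\to\eta_{\max}}v=0$ in part (B), which is what the ``no local minimum below $c^{*}$'' argument is for.
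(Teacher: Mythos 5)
Your argument is correct in substance but follows a genuinely different route from the paper's Appendix~\ref{LocExist}. For the local theory the paper first linearises \eqref{SelfSimilarEquation} around $c^*$, takes the hypergeometric solutions $P$, $\overline{P}$ of \eqref{LinearizedSelfSimilarEquation-E} as a basis, substitutes $v=c^*+Q(\eta)z$ and runs Banach fixed-point arguments for the resulting integral equation \eqref{IntegralSelfSimilarEquation} separately at $\eta_0=0$, $\eta_0$ near $1^-$ and $\eta_0=1$ (Lemmas~\ref{ExistenceNear1-G}--\ref{ExistenceCross1-G}); you instead integrate the self-adjoint form $(p v')'=\dots$ directly, which produces the same singular weights $\zeta^{n-1}(1-\zeta^2)^{\pm(\frac n2 \mp \frac56)}$ without any special functions --- a more elementary and equally valid contraction. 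Your energy identity $\frac{d}{d\eta}\mathcal E=-(\frac{\eta}{3}+\frac{n-1}{\eta})(v')^2$ together with the ``no interior local minimum below $c^*$ for $\eta>1$'' observation is a real addition: the paper never actually argues part (B), and your identity moreover yields $v\ge(\frac{c^2}{9}+\frac1c)^{-1}$ and $|v'|\le C(1-\eta^2)^{-1/2}$ on $[0,1)$, so quenching cannot occur before the light cone and case (C) is in fact vacuous --- consistent with, but stronger than, the stated proposition. The one thin spot is the continuation across $\eta=1$: you identify the indicial roots $0$ and $\frac n2+\frac16$ and the free coefficient $\beta$ correctly (matching Lemma~\ref{ATM3}), but the equation is nonlinear, so ``there is a Frobenius basis'' is only a formal statement; the actual existence of the one-parameter family on $[1,1+\delta]$ requires the fixed-point argument of Lemma~\ref{ExistenceCross1-G} with the $\beta\int_1^\eta(\zeta^2-1)^{\frac n2-\frac56}\cdots\,d\zeta$ term (which indeed behaves like $(\eta-1)^{\frac n2+\frac16}$). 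You acknowledge this by deferring to ``the integral reformulation on $\eta>1$'', so I regard it as an omission of detail rather than a gap, but it is precisely where the paper does its main work.
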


	\subsection{Integral Formulation of Self-Similar Equation}\label{LocExistSec1}
	For the proof of Pro\-position \ref{Existence of Self-Similar Solution for n>1}, we show that there exists a unique local solution to \eqref{SelfSimilarEquation}, with initial conditions imposed at $\eta_0\geq 0$ given by
	\be\label{InitialValue}
	v(\eta_0)=c>0,\quad \textstyle{\frac{dv}{d\eta}}(\eta_0)=d
	\ee
	{when $\eta_0 \neq 1$ (see Section \ref{LocExistSec4} for details when $\eta_0=1$).} The Picard-Lindel\"{o}f Theorem implies the existence of a unique local solution %of \eqref{SelfSimilarEquation} for given initial values $v(\eta_0)>0$, $\frac{dv}{d\eta}(\eta_0)$ 
	in an intervall $[\eta_0, \eta_0+\delta)$ away from $\eta =0$ and $1$, where $\delta>0$. To prove Proposition \ref{Existence of Self-Similar Solution for n>1} it is then sufficient to consider local existence and uniqueness for $\eta_0$ near the singular points $\eta=0,1$. {The expansion in Proposition \ref{Existence of Self-Similar Solution for n>1}(D) follows from Lemma \ref{ATM3}.}
	
	We reformulate \eqref{SelfSimilarEquation}, \eqref{InitialValue} as an integral equation, which is obtained from the linearisation \eqref{SelfSimilarEquation} around $c^*$, given by
	\be\label{LinearizedSelfSimilarEquation-E}
	(1-\eta^2)\frac{d^2\bar{v}}{d\eta^2}+\left(\frac{n-1}{\eta}-\frac{2}{3}\eta\right)\frac{d\bar{v}}{d\eta}+\frac{2}{3}\bar{v}=0.
	\ee
	For $n>2$ the following hypergeometric functions $P$, $\overline{P}$ form a basis of solutions to the linearized equation \eqref{LinearizedSelfSimilarEquation-E}: 
	$$\textstyle{P(\eta)=\eta^{2-n}{_2\mathbf{F}_1}\left(\left[\frac{4}{3}-\frac{n}{2},\ \frac{1}{2}-\frac{n}{2}\right],\ \left[2-\frac{n}{2}\right],\ \eta^2\right), \ \overline{P}(\eta)={_2\mathbf{F}_1}\left(\left[-\frac{1}{2},\ \frac{1}{3}\right],\ \left[\frac{n}{2}\right],\ \eta^2\right).}$$
	$P$ is smooth near $\eta=1$, with $P(1)=1$, while $\overline{P}$ is smooth near $\eta=0$ and $\overline{P}(0)=1$. In fact, $P$ is a rational function for odd $n$ and, specifically, $P(\eta) = \frac{\eta^2+3}{4\eta}$ when $n=3$. A basis for $n=1$ is given by $1$ and $\eta$, while for $n=2$ a basis is given by
	$$\textstyle{{_2\mathbf{F}_1}\left(\left[-\frac{1}{2},\ \frac{1}{3}\right], \left[-\frac{1}{6}\right], -\eta^2 + 1\right), (\eta^2 - 1)^{7/6} {_2\mathbf{F}_1}\left(\left[\frac{2}{3},\ \frac{3}{2}\right], \left[\frac{13}{6}\right], -\eta^2 + 1\right)}\ .$$
	%	For odd dimensions $n$, the solution $P$ is a rational function of $\eta$, this can be seen in Appendix \ref{OddN}. The special case $n=3$, solution $Q(\eta) = \frac{\eta^2+3}{\eta}$ of \eqref{LinearizedSelfSimilarEquation-E} is studied in detail in Appendix \ref{n=3}.
	
	Given a solution $Q$ of the linearized equation \eqref{LinearizedSelfSimilarEquation-E}, with $Q(\eta_0)>0$, we set
	\be\label{Transformation2}
	v(\eta)=c^*+w(\eta, z),\quad w(\eta, z)= Q(\eta)z,\quad z=z(\eta)\ .
	\ee
	The self-similar equation \eqref{SelfSimilarEquation} then becomes
	\be\label{SelfSimilarEquation4}
	\frac{d^2z}{d\eta^2}+\left(\frac{2}{Q(\eta)}\frac{dQ(\eta)}{d\eta}+\frac{3(n-1)-2\eta^2}{3\eta(1-\eta^2)}\right)\frac{dz}{d\eta}=\frac{f(w(\eta, z))}{(1-\eta^2)Q(\eta)},
	\ee
	where
	\[f(w(\eta, z))=\frac{1}{[c^*+w(\eta, z)]^{2}}-\frac{1}{(c^*)^{2}}+\frac{2}{(c^*)^3}w(\eta, z)\ .\]
	The initial condition \eqref{InitialValue} becomes
	\be\label{InitialValue3}
	z(\eta_0)=\frac{c-c^*}{Q(\eta_0)}=:z_0,\quad \frac{dz}{d\eta}(\eta_0)=\frac{d}{Q(\eta_0)}-\frac{c-c^*}{Q^2(\eta_0)}\frac{dQ}{d\eta}(\eta_0)=:z_0^*\ .
	\ee
	%We now obtain an integral formula for 
	With the substitution  $y=\frac{dz}{d\eta}$, one readily derives the explicit solution to \eqref{SelfSimilarEquation4}, \eqref{InitialValue3}:% is readily found  turns \eqref{SelfSimilarEquation4} into a first-order differential equation for $y$,
	%\be\label{SelfSimilarEquation5}
	%\frac{dy}{d\eta}+\left(\frac{2}{Q(\eta)}\frac{dQ(\eta)}{d\eta}+\frac{3(n-1)-2\eta^2}{3\eta(1-\eta^2)}\right)y=\frac{f(w(\eta, z))}{(1-\eta^2)Q(\eta)}.
	%\ee
	%Multiplying both sides of the equation \eqref{SelfSimilarEquation5} by the integrating factor\\ $Q^2(\eta)\left(\eta^2-1\right)^{\frac{5}{6}-\frac{n}{2}}\eta^{n-1}$
	%leads to
	%\[\frac{d}{d\eta}\left(yQ^2(\eta)\left(\eta^2-1\right)^{\frac{5}{6}-\frac{n}{2}}\eta^{n-1}\right)=-f(w(\eta, z))Q(\eta)\left(\eta^2-1\right)^{-\frac{1}{6}-\frac{n}{2}}\eta^{n-1}.\]
	%Integrating the above equation gives
	%\[y=(\alpha_1 +\beta H(\eta-1))\frac{\left(\eta^2-1\right)^{\frac{n}{2}-\frac{5}{6}}}{Q^2(\eta)\eta^{n-1}}
	%-\frac{\left(\eta^2-1\right)^{\frac{n}{2}-\frac{5}{6}}}{Q^2(\eta)\eta^{n-1}}\int_{\eta_0}^{\eta}f(w(\xi, z))Q(\xi)\left(\xi^2-1\right)^{-\frac{1}{6}-\frac{n}{2}}\xi^{n-1}d\xi \ ,\]
	\begin{align}
		z(\eta)&=z_0+\alpha_1\int_{\eta_0}^{\min\{\eta,1\}}\frac{\left(\zeta^2-1\right)^{\frac{n}{2}-\frac{5}{6}}}{Q^2(\zeta)\zeta^{n-1}}d\zeta + \beta\int_{1}^{\max\{\eta,1\}}\frac{\left(\zeta^2-1\right)^{\frac{n}{2}-\frac{5}{6}}}{Q^2(\zeta)\zeta^{n-1}}d\zeta\notag\\ &
		\quad -\int_{\eta_0}^{\eta}\left[\int_{\xi}^{\eta}\frac{\left(\zeta^2-1\right)^{\frac{n}{2}-\frac{5}{6}}}{Q^2(\zeta)\zeta^{n-1}}d\zeta\right]\left[
		\frac{Q(\xi)\xi^{n-1}}{\left(\xi^2-1\right)^{\frac{1}{6}+\frac{n}{2}}}\right]f(w(\xi, z))d\xi \ \label{IntegralSelfSimilarEquation},
	\end{align}
	where $\alpha_1$ is determined by the initial condition at $\eta_0$,
	\begin{equation}\label{IntegralSelfSimilarEquationAlpha1}
		\alpha_1 = z_0^*Q^2(\eta_0)\left(\eta_0^2-1\right)^{\frac{5}{6}-\frac{n}{2}}\eta_0^{n-1}\ ,
	\end{equation}
	$\beta \in \mathbb{R}$ is an arbitrary constant, and $H$ denotes the Heaviside function. %Using $y=\frac{dz}{d\eta}$, we obtain an integral formula for the solution of \eqref{SelfSimilarEquation4} for $\eta> \eta_0$:
	Note that the existence of a unique solution to the differential equation \eqref{SelfSimilarEquation4}, \eqref{InitialValue3} is equivalent to the existence of a unique solution to the integral equation \eqref{IntegralSelfSimilarEquation}, \eqref{IntegralSelfSimilarEquationAlpha1}.
	
	%\begin{rem}
	%	For odd dimensions an explicit integral equation is obtained from \eqref{IntegralSelfSimilarEquation} by choosing the rational solution $Q = P$ from \eqref{geralsolnform}. In dimension $n=3$, $Q(\eta) = \frac{\eta^2+3}{\eta}$ leads to the integral formulation
	%	\begin{align}
		%		%\bar{v}=\frac{\eta^2+3}{\eta}\left(\bar{c}_1+\bar{c}_2\int_0^\eta\frac{(\xi^2-1)^{\frac{2}{3}}}{(\xi^2+3)^2}d\xi\right),\quad\text{here}\ \bar{c}_1,\ \bar{c}_2\ \text{are constants}.
		%		z(\eta)&=z_0+\alpha_1\int_{\eta_0}^{\min\{\eta,1\}}\frac{\left(\zeta^2-1\right)^{\frac{2}{3}}}{(\zeta^2+3)^2}d\zeta + \beta\int_{1}^{\max\{\eta,1\}}\frac{\left(\zeta^2-1\right)^{\frac{2}{3}}}{(\zeta^2+3)^2}d\zeta \notag\\ &
		%		-\int_{\eta_0}^{\eta}\left[\int_{\xi}^{\eta}\frac{\left(\zeta^2-1\right)^{\frac{2}{3}}}{(\zeta^2+3)^2}d\zeta\right]\left[
		%		\frac{(\xi^2+3)\xi }{\left(\xi^2-1\right)^{\frac{5}{3}}}\right]f(w(\xi, z))d\xi\label{GeneralSolution}.
		%	\end{align}
	%	See Appendix \ref{n=3} for a discussion.
	%	
	%\end{rem}
	
	%The Picard-Lindel\"{o}f Theorem implies that there exists a unique local solution of \eqref{SelfSimilarEquation} for given initial values $v(\eta_0)>0$, $\frac{dv}{d\eta}(\eta_0)$ in an intervall $[\eta_0, \eta_0+\delta)$ away from $\eta =0$ and $1$, where $\delta>0$. Therefore, for the proof of Theorem \ref{Existence of Self-Similar Solution for n>1} it is sufficient to consider local existence and uniqueness for $\eta_0$ near these points. Thus, we only need to prove the existence of a solution near the singular points $\eta=0,1$.
	\subsection{Local Existence of Unique Self-Similar Solution for $\eta_0$ near $1^-$}\label{LocExistSec2}
	We consider $\eta_0 = 1-\epsilon$ for a sufficiently small $\epsilon\in(0,\varepsilon_0(n))$, $n>2$. The function $Q=P$ is smooth near $\eta=1$ and satisfies $P(1)=1$. After possibly shrinking $\varepsilon_0(n)$,
	\be\label{P1}
	\textstyle{\frac{1}{2}}\leq P(\eta)\leq 2,\ \ \text{for all}\ 1-\varepsilon_0(n)\leq\eta\leq1.
	\ee
	For the integral formulation given by \eqref{IntegralSelfSimilarEquation}, we prove the local existence of a solution, as stated in the following lemma:
	\begin{lemma}\label{ExistenceNear1-G}
		{Let $\eta_0 = 1-\epsilon$, $\epsilon\in(0,\varepsilon_0(n))$.} There exists a $\delta>0$, depending only on the initial values \eqref{InitialValue3} and on the dimension $n$, such that  the integral equation \eqref{IntegralSelfSimilarEquation} has a unique solution in the interval $\mathcal{I}_\delta = \left[\eta_0,\ \min\{\eta_0+\delta,\ 1\}\right]$. 
	\end{lemma}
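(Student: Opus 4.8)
The plan is to show that the integral equation \eqref{IntegralSelfSimilarEquation}, restricted to the interval $\mathcal{I}_\delta = [\eta_0, \min\{\eta_0+\delta, 1\}]$ with $\eta_0 = 1-\epsilon$ and $\beta$ set appropriately (in fact the $\beta$-integral vanishes on $\mathcal{I}_\delta$ since $\max\{\eta,1\} = 1$ there), defines a contraction on a suitable closed ball in $C(\mathcal{I}_\delta)$. Concretely, I would define the operator
\[
(\mathcal{T}z)(\eta) = z_0 + \alpha_1 \int_{\eta_0}^{\eta} \frac{(\zeta^2-1)^{\frac{n}{2}-\frac{5}{6}}}{Q^2(\zeta)\zeta^{n-1}}\,d\zeta - \int_{\eta_0}^{\eta}\left[\int_{\xi}^{\eta}\frac{(\zeta^2-1)^{\frac{n}{2}-\frac{5}{6}}}{Q^2(\zeta)\zeta^{n-1}}\,d\zeta\right]\left[\frac{Q(\xi)\xi^{n-1}}{(\xi^2-1)^{\frac{1}{6}+\frac{n}{2}}}\right] f(w(\xi,z))\,d\xi
\]
on the ball $\mathcal{B}_R = \{ z \in C(\mathcal{I}_\delta) : \|z - z_0\|_\infty \le R\}$ for a fixed $R>0$ chosen so that $c^* + Q(\eta)z$ stays bounded away from $0$ on $\mathcal{I}_\delta$ (using \eqref{P1}); then $f$ and its first argument-derivative are Lipschitz on the relevant range, with a constant $L = L(R,n)$.

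The key estimates are on the two kernels. Since on $\mathcal{I}_\delta$ we have $1-\varepsilon_0(n) \le \zeta \le 1$, the factors $\zeta^{n-1}$ and $Q^2(\zeta)$ are bounded above and below by constants depending only on $n$ by \eqref{P1}, so the only delicate point is the behaviour of $(\zeta^2-1)^{\frac{n}{2}-\frac{5}{6}}$ as $\zeta \to 1^-$. Writing $\zeta^2 - 1 = (\zeta-1)(\zeta+1)$ and noting $\zeta+1 \sim 2$, the inner integral $\int_\xi^\eta (\zeta^2-1)^{\frac{n}{2}-\frac{5}{6}}\,d\zeta$ is $O(|\eta - 1|^{\frac n2 + \frac16}) = O(\epsilon^{\frac n2 + \frac16})$ — here one must note that the exponent $\frac n2 - \frac56 > -1$ for all $n \ge 1$, so the integral converges; for $n \ge 2$ in fact $\frac n2 - \frac 56 \ge \frac16 > 0$ and the integrand is bounded, making the estimate elementary. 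The outer weight $(\xi^2-1)^{-\frac16-\frac n2}$ is singular, but it is multiplied by the inner integral which vanishes to order $\frac n2 + \frac16$ at $\xi = 1$, so the product $[\int_\xi^\eta \cdots\,d\zeta]\cdot(\xi^2-1)^{-\frac16-\frac n2}$ is in fact bounded on $\mathcal{I}_\delta$ by a constant times $\epsilon^{0}$ after cancellation; thus $\|\mathcal{T}z - z_0\|_\infty \le |\alpha_1|\,C_n\epsilon^{\frac n2 + \frac16} + C_n \delta \cdot (\sup|f|)$, and similarly $\|\mathcal{T}z_1 - \mathcal{T}z_2\|_\infty \le C_n L \delta \|z_1 - z_2\|_\infty$. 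Choosing $\delta$ small enough (depending on $R$, $L$, $n$, and the data through $z_0, z_0^*$ hence $\alpha_1$) makes $\mathcal{T}$ map $\mathcal{B}_R$ into itself and be a contraction; the Banach fixed point theorem then yields the unique solution $z$, and hence, via \eqref{Transformation2}, the unique solution $v$ of \eqref{SelfSimilarEquation} on $\mathcal{I}_\delta$.

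The main obstacle I expect is keeping careful track of the cancellation between the vanishing of the inner integral and the singularity of the outer weight $(\xi^2-1)^{-\frac16-\frac n2}$ near $\xi = 1$: naively each factor is, respectively, small and large, and one needs the precise matching of exponents (inner order $\frac n2 + \frac16$ versus outer order $\frac n2 + \frac16$) to see the product is genuinely bounded rather than merely integrable. A clean way to handle this is to substitute $\zeta - 1 = (\xi-1)\sigma$ in the inner integral to exhibit the factor $(\xi-1)^{\frac n2 + \frac16}$ explicitly, reducing the double integral to a single integral with a bounded, $n$-dependent kernel; then all the estimates become routine. One should also double-check that the choice of $R$ can be made before $\epsilon$ is shrunk, i.e. that $|z_0| = |c-c^*|/|P(\eta_0)|$ is controlled — this is fine since $P(\eta_0) \ge \frac12$ — so that $c^* + P(\eta)z$ stays in a compact subset of $(0,\infty)$ uniformly, legitimising the Lipschitz bound on $f$. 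Finally, uniqueness on the full stated interval (not just locally) follows by the standard continuation/Grönwall argument once the contraction constant is under control.
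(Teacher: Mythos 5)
Your proposal is correct and follows essentially the same route as the paper: a Banach fixed-point argument for the same integral operator on a closed subset of $C(\mathcal{I}_\delta)$ keeping $c^*+w$ away from zero, with the crucial kernel estimate being exactly the cancellation you identify between the order-$\frac{n}{2}+\frac{1}{6}$ vanishing of the inner integral and the order-$\frac{1}{6}+\frac{n}{2}$ singularity of the outer weight at $\xi=1$ (the paper's estimate \eqref{GEst-2}). The only differences are cosmetic (a ball $\|z-z_0\|_\infty\le R$ versus the paper's two-sided bounds $\sigma_1\le c^*+w\le\sigma_2$).
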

	\begin{proof}
		%Recall $P(\eta)$ is a solution to linearized equation \eqref{LinearizedSelfSimilarEquation-E}, for example, when n is odd,
		%\be\label{GOddP}
		%P(\eta)=\frac{1}{\eta^{n-2}}P_o(\eta),\quad P_o(\eta)=\sum_{k=0}^{\frac{n-1}{2}}\bar{a}_k\eta^{2k}\quad \text{is a polynomial}.
		%\ee
		%On the other hand,
		%\[P(\eta)={_2\mathbf{F}_1}\left(\left[-\frac{1}{2},\ \frac{1}{3}\right],\ \left[-\frac{3n-5}{6}\right],\ 1-\eta^2\right),\]
		%$P(\eta)$ is a smooth function near $\eta=1$ and
		%\be\label{GEvenP}
		%\lim_{\eta\rightarrow1}P(\eta)=1.
		%\ee
		% According to the smoothness of $P(\eta)$ for $\eta$ near $1$ and \eqref{GEvenP}, $\epsilon\in(0, 1)$ is chosen such that, for all sufficiently small $\varepsilon_o\in(0, 1)$,
		For given $\delta>0$, to be fixed later, we set $\mathcal{I}_\delta = \left[\eta_0,\ \min\{\eta_0+\delta,\ 1\}\right]$ and
		\[B_\delta=\left\{z\in C\left(\mathcal{I}_\delta; \mathds{R}\right):\ z(\eta_0)=z_0,\ \sigma_1\leq c^*+w(\eta, z)\leq\sigma_2\right\}.\]
		Here, $\sigma_1$ and $\sigma_2$ are positive constants depending on the initial values \eqref{InitialValue3}. \\Consider the nonlinear operator $\Gamma$ on $B_\delta$ given by \eqref{IntegralSelfSimilarEquation},
		\[
		\textstyle{\Gamma z(\eta)= z_0+\alpha_1\int_{\eta_0}^{\eta}\frac{\left(\zeta^2-1\right)^{\frac{n}{2}-\frac{5}{6}}}{P^2(\zeta)\zeta^{n-1}}d\zeta
			-\int_{\eta_0}^{\eta}\left[\int_{\xi}^{\eta}\frac{\left(\zeta^2-1\right)^{\frac{n}{2}-\frac{5}{6}}}{P^2(\zeta)\zeta^{n-1}}d\zeta\right]
			\frac{P(\xi)\xi^{n-1}}{\left(\xi^2-1\right)^{\frac{1}{6}+\frac{n}{2}}}f(w(\xi, z))d\xi.}
		\]
		
		\noindent\textbf{Contraction.} We first prove that $\Gamma$ is a contractive map on $B_{\delta_1}$, for $\delta_1>0$ sufficiently small. For $z_1, z_2 \in B_{\delta_1}$, let $w_1=w(\xi, z_1)$, $w_2=w(\xi, z_2)$. Then
		\begin{align}
			\left|f(w_1)-f(w_2)\right|=\textstyle{\left|\frac{1}{(c^*+w_1)^2}-\frac{1}{(c^*+w_2)^2}+\frac{2}{(c^*)^3}(w_1-w_2)\right|}\qquad \qquad\qquad \quad \label{GEst-0}\\
			\leq\textstyle{\left|\frac{1}{(c^*+w_1)^2(c^*+w_2)}+\frac{1}{(c^*+w_1)(c^*+w_2)^2}+\frac{2}{(c^*)^3}\right|\left|w_1-w_2\right|
				\leq\left|\frac{2}{(\sigma_1)^3}+\frac{2}{(c^*)^3}\right|\left|w_1-w_2\right|}.\notag
		\end{align}
		Hence, 
		\begin{align}
			\left|\Gamma z_1(\eta)-\Gamma z_2(\eta)\right|
			=\textstyle{\left|\int_{\eta_0}^{\eta}\left[\int_{\xi}^{\eta}\frac{\left(\zeta^2-1\right)^{\frac{n}{2}-\frac{5}{6}}}{P^2(\zeta)\zeta^{n-1}}d\zeta\right]
				\left[\frac{P(\xi)\xi^{n-1}}{\left(\xi^2-1\right)^{\frac{1}{6}+\frac{n}{2}}}\right][f(w_1)-f(w_2)]d\xi\right|}\notag\\
			\leq\textstyle{\left|\frac{2}{(\sigma_1)^3}+\frac{2}{(c^*)^3}\right|} \|w_1-w_2\|_{C(\mathcal{I}_\delta; \mathds{R})} \int_{\eta_0}^{\eta}\left|\int_{\xi}^{\eta}\frac{\left(\zeta^2-1\right)^{\frac{n}{2}-\frac{5}{6}}}{P^2(\zeta)\zeta^{n-1}}d\zeta\frac{P^2(\xi)\xi^{n-1}}{\left(\xi^2-1\right)^{\frac{1}{6}+\frac{n}{2}}}\right|d\xi\ .\label{GEst-1}
		\end{align}
		Since $\eta_0\leq\xi\leq\zeta\leq\eta\leq1$, we have $\frac{\xi^{n-1}}{\zeta^{n-1}}\leq1$. Together with \eqref{P1}, we conclude
		\begin{align}
			\textstyle{\int_{\eta_0}^{\eta}\left|\int_{\xi}^{\eta}\frac{\left(\zeta^2-1\right)^{\frac{n}{2}-\frac{5}{6}}}{P^2(\zeta)\zeta^{n-1}}d\zeta\frac{P^2(\xi)\xi^{n-1}}{\left(\xi^2-1\right)^{\frac{1}{6}+\frac{n}{2}}}\right|d\xi
				= \int_{\eta_0}^{\eta}\left|\int_{\xi}^{\eta}\frac{\left(\zeta^2-1\right)^{\frac{n}{2}-\frac{5}{6}}}{\left(\xi^2-1\right)^{\frac{1}{6}+\frac{n}{2}}}\frac{P^2(\xi)}{P^2(\zeta)}\frac{\xi^{n-1}}{\zeta^{n-1}}d\zeta\right|d\xi}\notag\\
			\leq\textstyle{16\int_{\eta_0}^{\eta}\int_{\xi}^{\eta}\left|
				\frac{\left(\zeta^2-1\right)^{\frac{n}{2}-\frac{5}{6}}}{\left(\xi^2-1\right)^{\frac{1}{6}+\frac{n}{2}}}\right|
				d\zeta\ d\xi}
			\leq C_n |\eta-\eta_0|\label{GEst-2}
		\end{align}
		where $C_n$ depends on $n$ only.
		For $\delta_1 <\frac{1}{4C_n}\left|\frac{2}{(\sigma_1)^3}+\frac{2}{(c^*)^3}\right|^{-1}$
		one concludes
		\be\label{GContraction1}
		\left|\Gamma z_1(\eta)-\Gamma z_2(\eta)\right|\leq \textstyle{\frac{1}{4}}\|w_1-w_2\|_{C(\mathcal{I}_\delta; \mathds{R})} \leq \textstyle{\frac{1}{2}}\|z_1-z_2\|_{C(\mathcal{I}_\delta; \mathds{R})} \ \ \text{ for all $\eta\in\mathcal{I}_{\delta_1}$.}
		\ee
		\textbf{Preservation.} To see that $\Gamma:\ B_{\delta_2}\longrightarrow B_{\delta_2}$ for sufficiently small $\delta_2$,
		first note that $\Gamma z(\eta_{0})=z_0$. It suffices to show that there exists $\delta_2>0$, such that for all $\eta\in\mathcal{I}_{\delta_2}$
		\be\label{GMapInequality1}
		\sigma_1\leq c^*+w(\eta, \Gamma z(\eta))\leq\sigma_2.
		\ee
		Here,
		\begin{align}
			w(\eta, \Gamma z(\eta))&=\textstyle{P(\eta)z_0+\alpha_1P(\eta)\int_{\eta_{0}}^{\eta}\frac{\left(\zeta^2-1\right)^{\frac{n}{2}-\frac{5}{6}}}{P^2(\zeta)\zeta^{n-1}}d\zeta}\notag\\
			&\textstyle{-P(\eta)\int_{\eta_{0}}^{\eta}\left[\int_{\xi}^{\eta}\frac{\left(\zeta^2-1\right)^{\frac{n}{2}-\frac{5}{6}}}{P^2(\zeta)\zeta^{n-1}}d\zeta\right]
				\left[P(\xi)\left(\xi^2-1\right)^{-\frac{1}{6}-\frac{n}{2}}\xi^{n-1}\right]f(w(\xi, z))d\xi}\notag.
		\end{align}
		Let $\rho =|\alpha_1|$. Using \eqref{P1} we note that
		\begin{align}
			&\textstyle{\left|c^*+P(\eta)z_0+\alpha_1P(\eta)\int_{\eta_{0}}^{\eta}\frac{\left(\zeta^2-1\right)^{\frac{n}{2}-\frac{5}{6}}}{P^2(\zeta)\zeta^{n-1}}d\zeta\right|
				\leq\frac{2}{3}\sigma_2}\label{GEst0}
		\end{align}
		for $\sigma_2=\frac{3}{2}\left(c^*+2|z_0|+\frac{8(1-\eta_{0}) \rho (2|1-\eta_{0}|)^{\frac{n}{2}-\frac{5}{6}}}{\eta_{0}^{n-1}}\right)$.
		%We further estimate
		Because the integrand is bounded,
		\begin{align}
			&\textstyle{\left|P(\eta)\int_{\eta_{0}}^{\eta}\left[\int_{\xi}^{\eta}\frac{\left(\zeta^2-1\right)^{\frac{n}{2}-\frac{5}{6}}}{P^2(\zeta)\zeta^{n-1}}d\zeta\right]
				\left[P(\xi)\left(\xi^2-1\right)^{-\frac{1}{6}-\frac{n}{2}}\xi^{n-1}\right]f(w)d\xi\right| \leq C\left|\eta-\eta_{0}\right|}%\notag\\
			%		\leq&\int_{\eta_{0}}^{\eta}\left|\int_{\xi}^{\eta}\frac{\left(\zeta^2-1\right)^{\frac{n}{2}-\frac{5}{6}}}{\left(\xi^2-1\right)^{\frac{1}{6}+\frac{n}{2}}}
			%		\frac{\xi^{n-1}}{\zeta^{n-1}}\frac{P(\eta)P(\xi)}{P^2(\zeta)}f(w(\xi, z))d\zeta\right|d\xi\notag\\
			%		\leq&16\int_{\eta_{0}}^{\eta}\int_{\xi}^{\eta}\frac{\left|\zeta^2-1\right|^{\frac{n}{2}-\frac{5}{6}}}{\left|\xi^2-1\right|^{\frac{1}{6}+\frac{n}{2}}}d\zeta
			%		|f(w(\xi, z))|d\xi\notag\\
			%		\leq&16\left(\frac{2}{(c^{*})^3}+\frac{1}{c^*(\sigma_1)^2}+\frac{1}{(c^*)^2(\sigma_1)}\right)\int_{\eta_{0}}^{\eta}\int_{\xi}^{\eta}\frac{\left|\zeta^2-1\right|^{\frac{n}{2}-\frac{5}{6}}}{\left|\xi^2-1\right|^{\frac{1}{6}+\frac{n}{2}}}d\zeta|w(\xi, z)|d\xi\notag\\
			%		\leq&16C_n\sigma_2\left(\frac{2}{(c^{*})^3}+\frac{1}{c^*(\sigma_1)^2}+\frac{1}{(c^*)^2(\sigma_1)}\right)\left|\eta-\eta_{0}\right|
			\label{GEst1}
		\end{align}
		for a  constant $C$ only depending on $\sigma_1$, $\sigma_2$ and $n$. %Setting
		%	\[\delta_3=\frac{1}{48C_n}\left(\frac{2}{(c^{*})^3}+\frac{1}{c^*(\sigma_1)^2}+\frac{1}{(c^*)^2(\sigma_1)}\right)^{-1},\]
		If $\delta_2$ is sufficiently small, we conclude from \eqref{GEst0} and \eqref{GEst1}  that for $\eta\in\mathcal{I}_{\delta_2}$%,  \eqref{GEst1} becomes
		%	\be\label{GSecondPart1}
		%	\left|P(\eta)\int_{\eta_{0}}^{\eta}\left[\int_{\xi}^{\eta}\frac{\left(\zeta^2-1\right)^{\frac{n}{2}-\frac{5}{6}}}{P^2(\zeta)\zeta^{n-1}}d\zeta\right]
		%	\left[P(\xi)\left(\xi^2-1\right)^{-\frac{1}{6}-\frac{n}{2}}\xi^{n-1}\right]f(w(\xi, z))d\xi\right|\leq \frac{\sigma_2}{3}.
		%	\ee
		%	Hence, \eqref{GEst0} and \eqref{GSecondPart1} imply for all $\eta\in\mathcal{I}_{\delta_3}$,
		\be\label{GEst3}
		w(\eta, \Gamma z(\eta))\leq \sigma_2.
		\ee
		Similarly, for {$\sigma_1>0$} with
		$P(\eta_{0})z_0\geq-c^*+2\sigma_1$,
		{continuity of $P$ implies
			\be\label{GInequality3-1}
			c^*+P(\eta)z_0\geq\textstyle{\frac{3}{2}}\sigma_1
			\ee
			for $\eta \in [\eta_0,\eta_0+\tilde{\delta}]$, $\tilde{\delta}$ depending on %$c^*$ (hence $n$) and 
$\sigma_1$. For all $\eta$ in this intervall,} the boundedness of the integrand implies
		\begin{align}
			\left|\alpha_1P(\eta)\int_{\eta_{0}}^{\eta}\frac{\left(\zeta^2-1\right)^{\frac{n}{2}-\frac{5}{6}}}{P^2(\zeta)\zeta^{n-1}}d\zeta\right|\leq%\frac{8\rho}{\eta_{0}^{n-1}}\int_{\eta_{0}}^{\eta}\left|\zeta^2-1\right|^{\frac{n}{2}-\frac{5}{6}}d\zeta
			%		\leq\frac{8\rho(2|1-\eta_{0}|)^{\frac{n}{2}-\frac{5}{6}}}{\eta_{0}^{n-1}}|\eta-\eta_{0}|\label{GInequality3-2},
			C|\eta-\eta_{0}|\label{GInequality3-2},
		\end{align}
		%{This is the same as \eqref{GEst1}, isn't it? So you can simply refer to the above estimate.}
		%\textcolor{lightgray}{\begin{align}
				%&\left|P(\eta)\int_{\eta_{0}}^{\eta}\left[\int_{\xi}^{\eta}\frac{\left(\zeta^2-1\right)^{\frac{n}{2}-\frac{5}{6}}}{P^2(\zeta)\zeta^{n-1}}d\zeta\right]
				%\left[P(\xi)\left(\xi^2-1\right)^{-\frac{1}{6}-\frac{n}{2}}\xi^{n-1}\right]f(w(\xi, z))d\xi\right|\notag\\
				%\leq&16C_n\sigma_2\left(\frac{2}{(c^{*})^3}+\frac{1}{c^*(\sigma_1)^2}+\frac{1}{(c^*)^2(\sigma_1)}\right)\left|\eta-\eta_{0}\right|\label{GInequality3-20}.
				%\end{align}
				%Here, \eqref{GInequality3-20} follows with \eqref{GEst1}.}
			for a $C>0$ depending only on $\alpha_1$ and $n$.
			Hence, for sufficiently small $\delta_3$ %there exists a $\delta_4>0$ satisfying
			%	\[\delta_4=\sigma_1\left[\frac{8\rho(2|1-\eta_{0}|)^{\frac{n}{2}-\frac{5}{6}}}{\eta_{0}^{n-1}}+16C_n\sigma_2\left(\frac{2}{(c^{*})^3}+\frac{1}{c^*(\sigma_1)^2}+\frac{1}{(c^*)^2(\sigma_1)}\right)\right]^{-1}\]
			%	such that, for all $\eta\in\mathcal{I}_{\delta_4}$, \eqref{GInequality3-1}, \eqref{GInequality3-2} and \eqref{GEst1} imply
			we deduce in combination with \eqref{GEst1} that for  $\eta\in\mathcal{I}_{\delta_3}$	
			\begin{equation}
				c^*+w(\eta, \Gamma z(\eta))\geq \sigma_1 .\label{GInequality3-3}
			\end{equation}
			%\begin{align}
			%		c^*+w(\eta, \Gamma z(\eta))		\geq \textstyle{|c^*+P(\eta)z_0|-\left|\alpha_1P(\eta)\int_{\eta_0}^{\eta}\frac{\left(\zeta^2-1\right)^{\frac{n}{2}-\frac{5}{6}}}{P^2(\zeta)\zeta^{n-1}}d\zeta\right|\notag\\
				%		-&\left|P(\eta)\int_{\eta_0}^{\eta}\left[\int_{\xi}^{\eta}\frac{\left(\zeta^2-1\right)^{\frac{n}{2}-\frac{5}{6}}}{P^2(\zeta)\zeta^{n-1}}d\zeta\right]
				%		\left[P(\xi)\left(\xi^2-1\right)^{-\frac{1}{6}-\frac{n}{2}}\xi^{n-1}\right]f(w(\xi, z))d\xi\right|\notag\\
				%		\geq&\sigma_1\label{GInequality3-3}.
				%	\end{align}
			
			%If $\delta_2=\min\{\delta_3,\ \delta_4\}$, then  \eqref{GEst3} and \eqref{GInequality3-3} imply that  \eqref{GMapInequality1} holds for all $\eta\in\mathcal{I}_{\delta_2}$.
			%Letting
			%\[\delta=\min\left\{\delta_1,\ \delta_2\right\},\]
			We conclude that $\Gamma : B_\delta \to B_\delta$ is a contraction for $\delta = \min\left\{\delta_1, \delta_2, \delta_3\right\}$. According to the Banach fixed point theorem, $\Gamma$ has a unique fixed point, and thus the integral equation \eqref{IntegralSelfSimilarEquation} has a unique solution in $\mathcal{I}_{\delta}$.
			This concludes the proof of Lemma \ref{ExistenceNear1-G}.
		\end{proof}
		\subsection{Local Existence of Unique Self-Similar Solution for $\eta_0 = 0$}\label{LocExistSec3}
		
		We describe how to adapt the arguments in Section \ref{LocExistSec2} to the singular point $\eta_0 = 0$. In this case we use \eqref{IntegralSelfSimilarEquation} with $Q=\overline{P}$, which is smooth near $\eta=0$ and satisfies $\overline{P}(0)=1$. From $\frac{d\overline{P}}{d\eta}(0)=0$,
		the initial values \eqref{InitialValue0} for $v$ and the transformation \eqref{Transformation2} imply
		\be\label{InitialValue3-1}
		z(0)=c-c^*,\quad \frac{dz}{d\eta}(0)=0.
		\ee
		In particular, $\alpha_1=0$. The result is
		\begin{lemma}\label{ExistenceNear0-G}
			There exists a $\delta>0$,  depending only on the initial value \eqref{InitialValue3-1} and  on  $n$, such that the integral equation \eqref{IntegralSelfSimilarEquation} has a unique solution in $\mathcal{I_\delta} = [0, \delta]$.
		\end{lemma}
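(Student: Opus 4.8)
The plan is to mirror the proof of Lemma~\ref{ExistenceNear1-G}, adapting every estimate to the singular point $\eta_0 = 0$ where the coefficient $\frac{n-1}{\eta}$ degenerates. First I would fix the basis solution $Q = \overline{P}$ of the linearized equation \eqref{LinearizedSelfSimilarEquation-E}; since $\overline{P}$ is analytic near $\eta=0$ with $\overline{P}(0)=1$ and $\frac{d\overline{P}}{d\eta}(0)=0$, I can shrink an interval $[0,\varepsilon_0(n)]$ so that $\frac12 \le \overline{P}(\eta) \le 2$ there, giving an analogue of \eqref{P1}. Then with $\eta_0 = 0$ the formula \eqref{IntegralSelfSimilarEquationAlpha1} for $\alpha_1$ vanishes (because the prefactor $\eta_0^{n-1}=0$ and $z_0^*=0$ by \eqref{InitialValue3-1}), so the integral operator simplifies to
\[
	\Gamma z(\eta) = z_0 - \int_0^\eta \left[\int_\xi^\eta \frac{(\zeta^2-1)^{\frac n2 - \frac56}}{\overline{P}^2(\zeta)\,\zeta^{n-1}}\,d\zeta\right]\frac{\overline{P}(\xi)\,\xi^{n-1}}{(\xi^2-1)^{\frac16+\frac n2}}\,f(w(\xi,z))\,d\xi .
\]

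Next I would set up the same complete metric space $B_\delta = \{z \in C(\mathcal{I}_\delta;\mathbb{R}) : z(0) = z_0,\ \sigma_1 \le c^* + w(\eta,z) \le \sigma_2\}$ with $\mathcal{I}_\delta = [0,\delta]$, and verify the contraction and preservation properties exactly as before. The one genuinely new point is the behaviour of the double integral kernel near $\zeta=0$: for $0 \le \xi \le \zeta \le \eta \le \delta < 1$ the factors $(\zeta^2-1)^{\frac n2 - \frac56}$ and $(\xi^2-1)^{-\frac16-\frac n2}$ are bounded above and below by constants depending only on $n$, and the potentially dangerous ratio $\frac{\xi^{n-1}}{\zeta^{n-1}} \le 1$ since $\xi \le \zeta$ — so the inner integral $\int_\xi^\eta \zeta^{-(n-1)}\,d\zeta$ is multiplied against $\xi^{n-1}$, and $\xi^{n-1}\int_\xi^\eta \zeta^{1-n}\,d\zeta \le \xi^{n-1}\cdot \eta \cdot \xi^{1-n} = \eta$ (or, more carefully, one estimates $\xi^{n-1}\int_\xi^\eta \zeta^{1-n}\,d\zeta \le C_n(\eta-\xi)$ for $n\ge 2$, and for $n=2$ directly $\xi \ln(\eta/\xi) \le \eta$). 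Hence the whole double-integral bound $\le C_n|\eta - 0| = C_n\eta$ goes through just as in \eqref{GEst-2}, and the Lipschitz estimate on $f$ from \eqref{GEst-0} is unchanged. Choosing $\delta_1 < \frac{1}{4C_n}\bigl|\frac{2}{\sigma_1^3} + \frac{2}{(c^*)^3}\bigr|^{-1}$ gives contractivity, and choosing $\sigma_1,\sigma_2$ around $c^* + \overline{P}(0)z_0 = c$ together with a small $\delta_2,\delta_3$ gives preservation, since $w(\eta,\Gamma z(\eta)) = \overline{P}(\eta)z_0 + O(\eta)$ and $\overline{P}(\eta)z_0 \to z_0$ continuously. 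Taking $\delta = \min\{\delta_1,\delta_2,\delta_3\}$ and applying the Banach fixed point theorem yields the unique solution in $\mathcal{I}_\delta$.

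I expect the main obstacle to be bookkeeping the $\zeta^{-(n-1)}$ singularity of the inner integral at $\zeta = 0$ and confirming it is harmless after multiplication by the outer weight $\xi^{n-1}$; this is the only place the $\eta_0=0$ case differs structurally from the $\eta_0 = 1^-$ case, and it is where one must be careful to keep all constants dimension-dependent but $z_0$-uniform (uniform over the relevant range of initial data), exactly as claimed in the statement. The degeneracy at $\eta=1$ plays no role here because $\delta < 1$ keeps us away from the light cone, and the terms $(\zeta^2-1)^{\bullet}$, $(\xi^2-1)^{\bullet}$ are then smooth and bounded. Once Lemma~\ref{ExistenceNear0-G} is in hand, combining it with Lemma~\ref{ExistenceNear1-G}, the Picard--Lindelöf theorem on compact subintervals of $(0,1)\cup(1,\infty)$, and the blow-up alternative (part (B) of Proposition~\ref{Existence of Self-Similar Solution for n>1}, which follows because the only way a solution can fail to continue is $v \to 0$) completes the local existence theory for the initial value problem \eqref{SelfSimilarEquation}, \eqref{InitialValue0} in all dimensions $n$.
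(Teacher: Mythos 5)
Your proposal is correct and follows essentially the same route as the paper's proof: take $Q=\overline{P}$, note $\alpha_1=0$ from \eqref{InitialValue3-1}, and rerun the contraction and preservation estimates of Lemma~\ref{ExistenceNear1-G} on $B_\delta$ with $\mathcal{I}_\delta=[0,\delta]$. The only difference is that you spell out the control of the $\zeta^{1-n}$ singularity of the inner integral via $\xi^{n-1}\int_\xi^\eta\zeta^{1-n}\,d\zeta\le C_n\eta$, a detail the paper states is "easy to see"; your computation is the right one.
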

		\begin{proof}
			For given $\delta>0$, to be fixed later, we denote $\mathcal{I}_\delta = \left[0,\delta\right]$ and
			\be\label{GNBD}
			B_\delta=\left\{z\in C\left(\mathcal{I}_\delta; \mathds{R}\right):\ z(0)=z_0,\ \sigma_1\leq c^*+w(\eta, z)\leq\sigma_2\right\}.
			\ee
			Here, $\sigma_1$ and $\sigma_2$ are positive constants depending on the initial values \eqref{InitialValue3-1}. \\Consider the nonlinear operator $\Gamma$ on $B_\delta$ given by  \eqref{IntegralSelfSimilarEquation} with $\alpha_1=0$,
			\begin{align}
				\Gamma z(\eta)=\textstyle{z_0-\int_{0}^{\eta}\left[\int_{\xi}^{\eta}\frac{\left(\zeta^2-1\right)^{\frac{n}{2}-\frac{5}{6}}}{\overline{P}^2(\zeta)\zeta^{n-1}}d\zeta\right]\left[\overline{P}(\xi)\left(\xi^2-1\right)^{-\frac{1}{6}-\frac{n}{2}}\xi^{n-1}\right]f(w(\xi, z))d\xi}\label{GMap}.
			\end{align}
			The proof of Lemma \ref{ExistenceNear0-G} closely follows the proof of Lemma \ref{ExistenceNear1-G}. We describe the necessary adaptations.\\
			
			\noindent\textbf{Contraction.}
			It is easy to see that $\frac{1}{2}\leq \overline{P}\leq 2$ implies an estimate of the double integral in $\Gamma$ corresponding to \eqref{GEst-2}. Since \eqref{GEst-0} and \eqref{GEst-1} hold verbatim as in the proof of Lemma \ref{ExistenceNear1-G}, there again exists $\delta_1>0$, depending on the initial value \eqref{InitialValue3-1} and $n$, such that the contraction estimate \eqref{GContraction1} holds for all $\eta\in\mathcal{I}_{\delta_1}$.
			
			\noindent \textbf{Preservation.}
			The upper and lower bounds for $\overline{P}$ again imply an estimate of the double integral in $\Gamma$ corresponding to \eqref{GEst1}. Since $\alpha_1=0$, estimate \eqref{GEst0} is replaced by the observation
			$|c^* + \overline{P}(\eta) z_0| \leq c^* + 2 |z_0| =: \frac{2}{3} \sigma_2$.
			Together, these two estimates show as in Lemma \ref{ExistenceNear1-G} that there exists a $\delta_2>0$, depending on the initial value \eqref{InitialValue3-1} and the dimension $n$, such that \eqref{GEst1} holds. Then the upper bound \eqref{GEst3} again holds for all $\eta\in \mathcal{I}_{\delta_2}$. The lower bound \eqref{GInequality3-3} follows verbatim  for $\eta\in \mathcal{I}_{\delta_3}$ from \eqref{GEst1} and the observation
			$c^* + \overline{P}(\eta) z_0 \geq 2 \sigma_1$.\\
			The proof of Lemma \ref{ExistenceNear0-G} concludes by setting $\delta=\min\{\delta_1, \delta_2, \delta_3\}$.
		\end{proof}
		\subsection{Local Existence of a Family of Self-Similar Solutions for $\eta_0=1$}\label{LocExistSec4}
		When $\eta_0=1$, the initial condition $\frac{dv}{d\eta}(1)$ is determined by $v(1)$ from the differential equation \eqref{SelfSimilarEquation}, and the integral formulation \eqref{IntegralSelfSimilarEquation} contains a free parameter $\beta\in\mathds{R}$.
		%initial condition $\frac{dv}{d\eta}(1)$ of a solution $v$ at $\eta=1$ depends on $v(1)$, hence the self-similar equation \eqref{SelfSimilarEquation} with initial value condition at $\eta=1$, $v(1)=v_1>0$,
		%has a family of solutions $v$, we note that
		%\[v=v(\eta; [v(1), \beta]).\]
		%Here $\beta\in\mathds{R}$ is a parameter.
		As in Section \ref{LocExistSec2}, we consider \eqref{IntegralSelfSimilarEquation} with $Q=P$.
		Analogous to Lemma \ref{ExistenceNear1-G}, we obtain the local existence of a unique solution $v=v(\eta; [v(1), \beta])$
		to \eqref{IntegralSelfSimilarEquation} for given $\beta$:
		%For the integral formulation given by \eqref{IntegralSelfSimilarEquation},  analogous to the proof of Lemma \ref{ExistenceNear1-G}, we obtain the local existence of a solution, as stated in the following theorem:
		\begin{lemma}\label{ExistenceCross1-G}
			For any $\beta\in\mathds{R}$ there exists a $\delta>0$, depending on the initial value $v(1)$, on $\beta$ and on $n$, such that the integral equation \eqref{IntegralSelfSimilarEquation} has a unique solution in $\mathcal{I}_\delta=[1,\ 1+\delta]$.
		\end{lemma}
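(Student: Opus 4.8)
The plan is to prove Lemma~\ref{ExistenceCross1-G} by a Banach fixed point argument on $\mathcal I_\delta=[1,1+\delta]$, in close analogy with Lemma~\ref{ExistenceNear1-G}, now taking $\eta_0=1$ and $Q=P$, the branch of the linearized equation \eqref{LinearizedSelfSimilarEquation-E} which is smooth at $\eta=1$ with $P(1)=1$. With $\eta_0=1$ the first integral in \eqref{IntegralSelfSimilarEquation} collapses, $\int_{1}^{\min\{\eta,1\}}(\cdots)=0$, so the relevant operator is
\[
\Gamma z(\eta)=z_0+\beta\int_1^\eta\frac{(\zeta^2-1)^{\frac n2-\frac56}}{P^2(\zeta)\zeta^{n-1}}\,d\zeta-\int_1^\eta\Big[\int_\xi^\eta\frac{(\zeta^2-1)^{\frac n2-\frac56}}{P^2(\zeta)\zeta^{n-1}}\,d\zeta\Big]\frac{P(\xi)\xi^{n-1}}{(\xi^2-1)^{\frac16+\frac n2}}f(w(\xi,z))\,d\xi ,
\]
with $z_0=v(1)-c^*$ (using $P(1)=1$ in \eqref{InitialValue3}) and $w(\xi,z)=P(\xi)z(\xi)$, acting on the closed set $B_\delta=\{z\in C(\mathcal I_\delta;\mathbb R):z(1)=z_0,\ \sigma_1\le c^*+w(\eta,z)\le\sigma_2\}$ with $\sigma_1,\sigma_2$ chosen around $v(1)$ as in Lemma~\ref{ExistenceNear1-G}. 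The two surviving constants of integration are precisely $z(1)=z_0$ and $\beta$, which matches the one‑parameter freedom asserted in Proposition~\ref{Existence of Self-Similar Solution for n>1}(D).

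Granting that $\Gamma$ maps $B_\delta$ into itself and is a contraction for small $\delta$, the Banach fixed point theorem yields a unique $z=z(\,\cdot\,;[v(1),\beta])$, hence a unique $v_\beta=c^*+Pz$, for each $\beta\in\mathbb R$. The contraction and self‑mapping estimates would be carried out exactly as in Lemma~\ref{ExistenceNear1-G}: the Lipschitz bound \eqref{GEst-0}--\eqref{GEst-1} for $f$ on $[\sigma_1,\sigma_2]$, the pinching $\tfrac12\le P\le 2$ near $\eta=1$ from \eqref{P1}, and an estimate of the iterated‑integral kernel of the type $\le C_n|\eta-1|$ as in \eqref{GEst-2} and \eqref{GEst1}. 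The new homogeneous term $\beta\int_1^\eta(\zeta^2-1)^{\frac n2-\frac56}/(P^2\zeta^{n-1})\,d\zeta$ is harmless, being $O(|\beta|(\eta-1)^{\frac n2+\frac16})$ because its integrand behaves like $(\zeta-1)^{\frac n2-\frac56}$ with exponent $\tfrac n2-\tfrac56>-1$ for $n\ge2$, hence continuous and small on $\mathcal I_\delta$. Finally, the splitting $v_\beta=v_{\beta,reg}+v_{\beta,sing}$ with $v_{\beta,sing}(\eta)=\sum_{\alpha\in E}c_\alpha(\eta-1)^\alpha$, smallest exponent $\alpha=\tfrac n2+\tfrac16$, and coefficients $c_\alpha$ determined by $\beta$, follows from the indicial analysis of \eqref{AsymPowerSeries} in Lemma~\ref{ATM3}.

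The delicate point, and the step I expect to be the main obstacle, is that unlike Lemma~\ref{ExistenceNear1-G} where $\eta_0=1-\epsilon$ stays off the light cone, here $\eta_0=1$ is itself the singular point, so both integrals in $\Gamma z$ are improper at the lower endpoint. The weight $(\xi^2-1)^{-\frac16-\frac n2}$ has exponent $-\tfrac16-\tfrac n2\le-\tfrac76$, while the inner integral $\int_\xi^\eta(\cdots)d\zeta$ tends only to the small but nonzero constant $\int_1^\eta(\cdots)d\zeta$ as $\xi\to1^+$, so mere continuity of $z$ does not make the double integral converge; one needs $f(w(\xi,z))$ to be compatible with the singular structure at $\eta=1$. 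The way I would deal with this is to exploit that $\eta=1$ is a regular singular point of \eqref{SelfSimilarEquation4} with indicial exponents $0$ and $\tfrac n2+\tfrac16$ (the computation behind Lemma~\ref{ATM3}), and then either (i) refine $B_\delta$ to the Frobenius class near $\eta=1$, $z=z_0+z_1(\eta-1)+\dots+\beta(\eta-1)^{\frac n2+\frac16}(1+\dots)$ with the integer‑power coefficients forced by the equation, so that $f(w(\xi,z))-f(z_0)=O(\xi-1)$ and the leading nonintegrable part of the source is absorbed into the equation‑forced value of $z'(1)$; or (ii) derive $\Gamma$ and the estimates by passing to the limit $\eta_0\to1^+$ in Lemma~\ref{ExistenceNear1-G}, keeping careful track of the cancellation between the term $\alpha_1\int_{\eta_0}^{1}(\cdots)$ — whose prefactor $\alpha_1$ blows up like $(\eta_0-1)^{\frac56-\frac n2}$ by \eqref{IntegralSelfSimilarEquationAlpha1} — and the matching singular contribution of the double integral, the surviving freedom being exactly the parameter $\beta$. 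Making this regularisation precise and checking that the refined $B_\delta$ is preserved by $\Gamma$ is the crux; once that is in place the contraction, the self‑mapping property, uniqueness for fixed $\beta$, and the singular expansion are routine repetitions of Lemma~\ref{ExistenceNear1-G} together with Lemma~\ref{ATM3}.
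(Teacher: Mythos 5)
Your strategy is exactly the paper's: the paper proves Lemma \ref{ExistenceCross1-G} by running the Banach fixed point argument of Lemma \ref{ExistenceNear1-G} on the operator \eqref{IntBeyond1} over $B_\delta$, with the only modification being that the new homogeneous term $\beta\int_1^\eta(\zeta^2-1)^{n/2-5/6}P^{-2}\zeta^{1-n}\,d\zeta$ is absorbed into the preservation estimate \eqref{GEst0} by setting $\alpha_1:=\beta$, $\rho:=|\beta|$. Your identification of the operator, of $z_0=v(1)-c^*$, of the role of $\beta$ as the surviving constant of integration, and your reduction of the contraction and preservation steps to \eqref{GEst-0}--\eqref{GEst-2} and \eqref{P1} all match the paper.

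The ``delicate point'' you flag, however, is real, and you should be aware that the paper does not resolve it: it simply asserts that ``the double integral in the definition of $\Gamma$ is identical to the proof of Lemma \ref{ExistenceNear1-G}''. That assertion hides a genuine asymmetry. On $[\eta_0,1]$ the singular weight $(\xi^2-1)^{-1/6-n/2}$ sits at the \emph{upper} endpoint, where the inner integral $\int_\xi^\eta$ supplies a compensating factor of order $(1-\xi)^{n/2+1/6}$ (this is why \eqref{GEst-2} closes). On $[1,1+\delta]$ the weight sits at the \emph{lower} endpoint, the inner integral tends to the nonzero constant $\int_1^\eta$ as $\xi\to1^+$, and since $f(w(1,z))=f(z_0)\neq0$ for generic $v(1)\neq c^*$, the double integral in \eqref{IntBeyond1} diverges for a merely continuous $z\in B_\delta$. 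So the operator is not even well defined on the space on which the contraction is claimed. Your two proposed repairs --- restricting to a Frobenius-type class near $\eta=1$ so that the non-integrable part of the source is absorbed into the equation-forced value of $z'(1)$, or re-anchoring the inner antiderivative away from $1$ and tracking the cancellation with the divergent $\alpha_1$-term --- are both reasonable and are what a complete proof would need; neither is carried out in the paper, and neither is carried out in your proposal. In short: same method as the paper, correctly reproduced in all the steps the paper actually performs, but the step you yourself single out as the crux is left open in both places.
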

		\begin{proof}
			For given $\delta>0$, to be fixed later, we denote $\mathcal{I}_\delta = \left[1,\ \min\{1+\delta,\ 2\}\right]$ and
			\[B_\delta=\left\{z\in C\left(\mathcal{I}_\delta; \mathds{R}\right):\ z(1)=z_0,\ \sigma_1\leq c^*+w(\eta, z)\leq\sigma_2\right\} . \]
			Here, $\sigma_1$ and $\sigma_2$ are positive constants depending on the initial values $z_1$ and $\beta$. \\Consider the nonlinear operator $\Gamma$ on $B_\delta$ given by  \eqref{IntegralSelfSimilarEquation}, %with $\alpha_1=0$,
			\begin{equation}\label{IntBeyond1}
				\Gamma z(\eta)=z_0+ \textstyle{\beta\int_{1}^{\eta}\frac{\left(\zeta^2-1\right)^{\frac{n}{2}-\frac{5}{6}}}{{P}^2(\zeta)\zeta^{n-1}}d\zeta-\int_{1}^{\eta}\int_{\xi}^{\eta}\frac{\left(\zeta^2-1\right)^{\frac{n}{2}-\frac{5}{6}}}{{P}^2(\zeta)\zeta^{n-1}}d\zeta\frac{{P}(\xi)\xi^{n-1}}{\left(\xi^2-1\right)^{\frac{1}{6}+\frac{n}{2}}}f(w(\xi, z)) d\xi.}
			\end{equation}
{Note that the term involving $\alpha_1$ in \eqref{IntegralSelfSimilarEquation} vanishes when $\eta_0=1$.}
			The proof of Lemma \ref{ExistenceCross1-G} closely follows the proof of Lemma \ref{ExistenceNear1-G}, with a new term involving $\beta$. We here describe the necessary adaptations.\\
			
			\noindent\textbf{Contraction.}
			The double integral in the definition of $\Gamma$ is identical to the proof of Lemma \ref{ExistenceNear1-G}. The contraction argument is therefore unchanged, using $\frac{1}{2}\leq P \leq 2$.	
			
			\noindent \textbf{Preservation.}
			As in the contraction argument, all estimates for the double integral in $\Gamma$ follow by using $\frac{1}{2}\leq P \leq 2$. For the term involving $\beta$, the inequality \eqref{GEst0} is valid with $\alpha_1 := \beta$ and $\rho := |\beta|$ in the definition of $\sigma_2$. As in the proof of Lemma \ref{ExistenceNear1-G}, these two estimates show  the upper bound \eqref{GEst3}  for all $\eta\in \mathcal{I}_{\delta_2}$. The lower bound \eqref{GInequality3-3} follows verbatim  for $\eta\in \mathcal{I}_{\delta_3}$, from \eqref{GEst1} and the observations
			$c^* + P(\eta) z_0 \geq 2 \sigma_1 $, respectively
			\[\textstyle{\left|\beta P(\eta)\int_{1}^{\eta}\frac{\left(\zeta^2-1\right)^{\frac{n}{2}-\frac{5}{6}}}{P^2(\zeta)\zeta^{n-1}}d\zeta\right|\leq8|\beta|\int_{1}^{\eta}\left|\left(\zeta^2-1\right)^{\frac{n}{2}-\frac{5}{6}}\right|d\zeta
				\leq C|\eta-1|}.\]
			The proof of Lemma \ref{ExistenceCross1-G} concludes by setting $\delta=\min\{\delta_1,\delta_2,\delta_3\}$, which depends on $z_0$, $\beta$ and the dimension $n$.
		\end{proof}

\vskip2pc

\bibliographystyle{RS}
\bibliography{Bibliographyf.bib}

\end{document}